\newtheorem{thm}{Theorem}[section]
\newtheorem{cor}[thm]{Corollary}
\newtheorem{lem}[thm]{Lemma}
\newtheorem{prop}[thm]{Proposition}
\theoremstyle{remark}
\newtheorem{rmk}[thm]{Remark}
\theoremstyle{definition}
\newtheorem{defi}[thm]{Definition}
\def \R {\mathbb{R}}
\def \C {\mathbb{C}}
\def \E {\mathcal{E}}
\def \db {\bar \partial}
\DeclareMathOperator\dVol {dVol}
\DeclareMathOperator \Id {Id}
\newcommand{\CBbb}{\mathbb C}
\newcommand{\ZBbb}{\mathbb Z}
\newcommand{\Ecal}{\mathcal E}
\newcommand{\Scal}{\mathcal S}
\newcommand{\GL}{\mathsf{GL}}
\newcommand{\U}{\mathsf{U}}
\newcommand{\SO}{\mathsf{SO}}
\DeclareMathOperator{\End}{End}
\DeclareMathOperator{\ch}{ch}
\DeclareMathOperator{\rank}{rank}
\DeclareMathOperator{\vol}{vol}
\DeclareMathOperator{\tr}{tr}
\DeclareMathOperator{\Iso}{Iso}
\newcommand{\dbar}{\bar\partial}
\numberwithin{equation}{section}
\begin{document}

\title[Nonabelian Hodge correspondence]{The nonabelian Hodge 
correspondence for balanced Hermitian metrics of Hodge-Riemann type}
\author[Chen]{Xuemiao Chen}
\author[Wentworth]{Richard A. Wentworth}
\thanks{R.W.'s research is supported in part by NSF grant DMS-1906403.}
\address{Department of Mathematics, University of Maryland, College Park,
MD 20742, USA}
\email{xmchen@umd.edu}
\email{raw@umd.edu}

\subjclass[2020]{Primary: 53C07, 14J60; Secondary: 58E20}
\keywords{Hermitian-Einstein metric, Higgs bundle, harmonic map}

\begin{abstract}
This paper extends the nonabelian Hodge correspondence for
    K\"ahler manifolds to a larger class of hermitian metrics on
    complex manifolds called balanced 
     of Hodge-Riemann type. Essentially, it grows out of a few key observations so that the known results, especially the Donaldson-Uhlenbeck-Yau theorem and Corlette's theorem, can be applied in our setting.    
    Though not necessarily K\"ahler, we show that the Sampson-Siu Theorem 
    proving that harmonic maps are pluriharmonic remains valid for a slightly smaller class by using the known argument. Special important examples 
    include those balanced metrics arising  from multipolarizations. 
\end{abstract}
\maketitle
\thispagestyle{empty}
\bibliographystyle{amsplain}

\section{Introduction}
Let $X$ be a compact, complex manifold of dimension $n$.
Recall that a hermitian metric on
$X$ is called \emph{balanced} if 
$d\omega^{n-1}=0$, where $\omega$ is the fundamental (K\"ahler)
$(1,1)$-form of the metric.
The balanced metrics are a more restrictive
class than the \emph{Gauduchon} metrics, which satisfy
$\partial\dbar\omega^{n-1}=0$.
Nevertheless, there are many examples of balanced, non-K\"ahler, metrics
(cf.\ \cite[p.\ 292]{Michelsohn:82}).

In this paper we consider a further condition.
We say that a balanced  metric is of \emph{Hodge-Riemann type}, if it
admits an expression: 
\begin{align} \label{HRtype}
    \frac{\omega^{n-1}}{(n-1)!}=\omega_0\wedge\Omega_0
\end{align}
where $\omega_0$ (resp.\ $\Omega_0$) is a real $(1,1)$
(resp.\ $(n-2,n-2)$), and  $\Omega_0$ satisfies the Hodge-Riemann bilinear
relations (see Definition \ref{Defi:Hodge-Riemann} for the precise
definition). 

The condition of being balanced of Hodge-Riemann type seems very restrictive.
However,  many examples of non-K\"ahler metrics satisfying this
property come from \emph{multipolarizations}. 
Namely, let $\omega_0,\omega_1,  \ldots, \omega_{n-2}$
be positive $(1,1)$-forms on $X$, and suppose
\begin{equation} \label{eqn:multipolarization}
\frac{\omega^{n-1}}{(n-1)!}=\omega_0 \wedge \cdots \wedge \omega_{n-2}
\end{equation}
such that
\begin{itemize}
\item $d\omega^{n-1}=0$;
\item $d(\omega_1 \wedge \cdots\wedge \omega_{n-2})=0$
\end{itemize}
(e.g.\ both conditions are automatic if the $\omega_i$ are K\"ahler).
Then  by a
result of Timorin \cite{Timorin:98}, $\omega$ is of Hodge-Riemann type. 
Even when the $\omega_i$ are K\"ahler metrics, $\omega$ is not so in general.

In this note we show that certain properties of Hermitian-Einstein metrics and
equivariant harmonic maps  familiar for K\"ahler manifolds continue to hold
for balanced metrics of Hodge-Riemann type. Namely, we prove
\begin{enumerate}
    \item a generalized Bogomolov-Miyaoka-Yau inequality for $\omega$-polystable
        holomorphic (and Higgs) bundles (Corollary \ref{cor:bmy});
    \item a version of the Sampson-Siu pluriharmonicity theorem  for harmonic maps
        to targets with nonpositive complexified sectional curvature (Theorem
        \ref{thm:siu-sampson} and Corollary \ref{cor:siu-sampson});
    \item the nonabelian Hodge correspondence relating $\omega$-stable
        Higgs bundles with vanishing Chern classes to irreducible
        representations of the fundamental group (Theorem \ref{thm:nah}).
\end{enumerate}
Let us remark that for the class of Gauduchon metrics, items (2) and (3) do \emph{not}
hold in general (see \cite{Biswas:11}), and the statement of item (1) cannot even be formulated.

The simple idea behind these generalizations is well explained in
\cite[Lemma 1.1]{Simpson:92}. Let us focus on item (3) above.  Suppose $D$ be a 
complex connection on a vector bundle $E\to X$.
A hermitian metric $h$ on $E$ gives a decomposition $D=D''+D'$
(see \eqref{eqn:hodge}).  Conversely, a Higgs bundle defines an operator $D''$, and a
metric allows one to complete it to a complex connection $D$  by setting
$D'=(D'')^\ast$. 
Let $F_D=D^2$ be the curvature of $D$, 
and $G_D=(D'')^2$ the \emph{pseudo-curvature}. 
Flatness of $D$ is the equation $F_D=0$, whereas $D$ arises from a Higgs
bundle iff $G_D=0$.  

Now suppose $\omega$ is a balanced metric on $X$, so that the degree and slope
stability of holomorphic bundles can be defined. 
Given  an $\omega$-slope stable Higgs bundle with $\ch_1(E)=0$,
one can find a metric $h$ so
that the associated connection  $D$ satisfies
\begin{equation} \label{eqn:F}
    F_D\wedge \omega_0\wedge\Omega_0=0
\end{equation}
 Similarly, given a flat connection $D$
one can find a \emph{harmonic metric}, meaning that 
\begin{equation} \label{eqn:G}
G_D\wedge\omega_0\wedge\Omega_0=0
\end{equation} 
Thus, under the assumptions,  the forms $F_D$ and $G_D$ are ``primitive'',
in the sense of \eqref{eqn:primitive} below. 

The nonabelian Hodge correspondence follows by showing that if
in addition
$\ch_2(E)=0$, then \eqref{eqn:F} implies $F_D=0$, and on the other hand,
\eqref{eqn:G} always implies
$G_D=0$  (as pointed out in \cite[p.\ 17]{Simpson:92}, the ``pseudo-Chern
class'' defined by $G_D$ automatically vanishes by the flatness of $D$). 
Now, if we assume $\omega$ is of Hodge-Riemann type, then these conclusions hold
by integrating $\tr(F_D\wedge F_D)$ or $\tr(G_D\wedge G_D)$ 
against $\Omega_0$, and using the vanishing of the Chern classes and the
Hodge-Riemann bilinear relations. 
Thus, we see that the K\"ahler condition may be relaxed.

\medskip
\noindent
{\bf Acknowledgements}. 
The authors warmly thank
Yang Li for comments on an earlier version of this paper.

\section{Hodge-Riemann forms}
In this section, we recall the notion of  a Hodge-Riemann form on a
polarized complex vector space, i.e.\ a complex space $V$ with a constant 
K\"ahler form $\omega_0$. Denote $\Lambda^{p,q}$ to be the space of constant $(p,q)$ forms over $V$. We fix $\Omega_0$ to be any real $(n-p-q,n-p-q)$ form. On $\Lambda^{p,q}$, we can define a Hermitian form as 
$$
Q(\alpha, \beta):=(\sqrt{-1})^{p-q} (-1)^{\frac{(p+q)(p+q-1)}{2}}
*(\alpha \wedge \overline{\beta}\wedge \Omega_0)
$$
The space of \emph{primitive} forms of degree $(p,q)$ associated to $(\Omega_0, \omega_0)$ is defined as
\begin{equation} \label{eqn:primitive}
P^{p,q}=\{\alpha\in \Lambda^{p,q}: \alpha \wedge \omega_0 \wedge \Omega_0= 0\}.
\end{equation}
\begin{defi}\label{Defi:Hodge-Riemann}
We call $\Omega_0$ a Hodge-Riemann form for degree $(p,q)$ with respect to $\omega_0$ if 
\begin{enumerate}
\item there exists a $Q$-orthogonal decomposition 
$$\Lambda^{p,q}=\C \omega_0\oplus P^{p,q};$$
\item $Q$ is positive definite on $P^{p,q}$. 
\end{enumerate}
\end{defi}

\begin{rmk}
It follows from the classical Hodge-Riemann relation that
    $\Omega_0=\omega_0^{n-p-q}$ is a Hodge-Riemann form (cf.\ \cite[Thm.\
    6.32]{Voisin:07}). 
\end{rmk}

In general, the Hodge-Riemann property of a form is difficult to verify.
However, 
we have the following result,  which has been used to get the
mixed Hodge-Riemann relation (see \cite{DinhNguyen:06}).
\begin{prop}[{\cite[Main Theorem]{Timorin:98}, see also \cite{Gromov:90}}]\label{prop1.2}
For any constant positive $(1,1)$ forms $ \omega_1,\cdots, \omega_k$ on
    $(V,\omega_0)$, $\omega_1\wedge \cdots\wedge \omega_{n-k}$
    is a Hodge-Riemann form  with respect to $\omega_0$ 
    for any degrees $(p,q)$ satisfying $p+q=k$.
\end{prop}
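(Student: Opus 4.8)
The plan is to deduce the statement from two ingredients: the classical Hodge--Riemann relations for a single Kähler form (the Remark above, i.e.\ the case $\Omega_0=\omega_0^{n-k}$), together with a deformation argument over the cone of positive forms. I will carry out the details in the degree relevant to the application, $(p,q)=(1,1)$ and $k=2$, where $\Omega_0=\omega_1\wedge\cdots\wedge\omega_{n-2}$ and $P^{1,1}=\{\alpha\in\Lambda^{1,1}:\alpha\wedge\omega_0\wedge\Omega_0=0\}$ is a hyperplane; the general $(p,q)$ with $p+q=k$ follows the same pattern once the splitting $\C\omega_0\oplus P^{p,q}$ in Definition \ref{Defi:Hodge-Riemann} is replaced by the full Lefschetz decomposition. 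The first observation is a bookkeeping one: since $\omega_0$ is real, $Q(\alpha,\omega_0)=-*(\alpha\wedge\omega_0\wedge\Omega_0)$, so $P^{1,1}$ is exactly the $Q$-orthogonal complement of $\omega_0$, and $Q(\omega_0,\omega_0)<0$ because $\omega_0^2\wedge\Omega_0$ is a positive multiple of the volume form. Hence condition (1) of the definition holds automatically, and only the positivity in condition (2) requires genuine input.

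The key point, and the main obstacle, is to establish that $Q$ is nondegenerate on $\Lambda^{1,1}$ for \emph{every} product $\Omega_0=\omega_1\wedge\cdots\wedge\omega_{n-2}$ of positive forms. This is the \emph{mixed hard Lefschetz theorem}: the wedge map $L_{\Omega_0}\colon\Lambda^{1,1}\to\Lambda^{n-1,n-1}$, $\alpha\mapsto\alpha\wedge\Omega_0$, is an isomorphism, equivalently $\alpha\wedge\Omega_0=0$ forces $\alpha=0$. One cannot obtain this by deforming from the diagonal and invoking continuity, since nondegeneracy is precisely the hypothesis a continuity argument consumes; nor can one simultaneously diagonalize the $n-2$ distinct positive forms. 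Instead I would argue by induction on $n$ and on the number of factors, peeling off one positive form $\omega_j$ at a time and applying the inductive statement to the primitive subspaces it determines. In this constant-coefficient (linear) setting the required positivity is equivalent to the hard part of the Alexandrov--Fenchel / Khovanskii--Teissier inequalities, so I expect essentially all of the work to lie here.

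With mixed hard Lefschetz in hand, the signature of $Q$ is pinned down by deformation. The set of tuples $(\omega_1,\dots,\omega_{n-2})$ of positive $(1,1)$-forms is a convex, hence connected, cone, so one may join any given tuple to the diagonal tuple $(\omega_0,\dots,\omega_0)$ along the straight-line path $\omega_j(t)=(1-t)\omega_j+t\omega_0$, which stays positive for all $t\in[0,1]$. Along this path the Hermitian form $Q=Q_{\Omega_0(t)}$ varies continuously and, by the previous paragraph, remains nondegenerate; therefore its signature on $\Lambda^{1,1}$ is locally constant, and hence constant. At the endpoint $\Omega_0=\omega_0^{n-2}$ the classical Hodge--Riemann relations (the Remark) give signature $(n^2-1,1)$, with the single negative direction spanned by $\omega_0$. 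Transporting this back, $Q$ has signature $(n^2-1,1)$ at the original tuple as well; since $\omega_0$ already accounts for the negative direction, $Q$ is positive definite on $P^{1,1}=\omega_0^{\perp_Q}$, which is condition (2). Together with the orthogonal decomposition established in the first paragraph, this completes the proof.
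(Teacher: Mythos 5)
The paper does not actually prove this proposition --- it is quoted directly from Timorin's theorem --- so the comparison here is really with Timorin's argument. Your overall architecture (reduce condition (1) to the nonvanishing of $Q(\omega_0,\omega_0)$, establish nondegeneracy of $Q$ for every tuple of positive forms, then transport the signature from the diagonal tuple $\Omega_0=\omega_0^{n-2}$ along a straight-line path in the convex cone of positive tuples) is exactly the standard skeleton of the proof of the mixed Hodge--Riemann relations, and the bookkeeping in your first and third paragraphs is correct: $Q(\alpha,\omega_0)=-*(\alpha\wedge\omega_0\wedge\Omega_0)$, so $P^{1,1}$ really is the $Q$-orthocomplement of $\omega_0$; $Q(\omega_0,\omega_0)<0$; and the signature count $(n^2-1,1)$ at the K\"ahler endpoint is right.

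The gap is in the second paragraph, and it is not a small one: the mixed hard Lefschetz statement --- that $\alpha\wedge\omega_1\wedge\cdots\wedge\omega_{n-2}=0$ forces $\alpha=0$ for an \emph{arbitrary} tuple of positive $(1,1)$-forms --- is the entire content of the theorem, and you assert it rather than prove it. You correctly observe that it cannot be obtained by continuity (nondegeneracy is precisely what the continuity argument consumes) and that the factors cannot be simultaneously diagonalized, but the proposed remedy, ``induction on $n$ and on the number of factors, peeling off one positive form at a time and applying the inductive statement to the primitive subspaces it determines,'' is never carried out, and as stated it hides a genuine subtlety: in Timorin's proof the induction is a \emph{joint} one, in which the mixed Hodge--Riemann relations in dimension $n-1$ (applied to the forms restricted to the kernel of one factor) are what deliver mixed hard Lefschetz in dimension $n$, which then feeds the deformation argument to give mixed Hodge--Riemann in dimension $n$. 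Without setting up that joint inductive hypothesis precisely --- which restriction is taken, why positivity of the restricted $Q$ in lower dimension implies injectivity of $L_{\Omega_0}$ one dimension up --- the proof is incomplete exactly where all the difficulty lies. Everything else in your write-up is routine once that lemma is granted, so as it stands this is a correct reduction of the proposition to its hardest ingredient rather than a proof.
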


As a special case, this gives 
\begin{cor}\label{cor1.4}
For any constant positive $(1,1)$ forms $\omega_1, \cdots, \omega_{n-2}$ on $(V, \omega_0)$
\begin{enumerate}
\item $\omega_0 \wedge \cdots\wedge \omega_{n-2}$ is a 
    strictly positive $(n-1,n-1)$ form. In particular, there exists a positive $(1,1)$ form $\omega$ so that 
$$
        \frac{\omega^{n-1}}{(n-1)!}=\omega_0\wedge \cdots\wedge \omega_{n-2}
$$
        (cf.\ \cite[p.\ 279]{Michelsohn:82}, and also \cite{Toma:10}).
\item $\omega_1\wedge \cdots\wedge \omega_{n-2}$ is a 
    Hodge-Riemann form for degrees $(p,q)$ satisfying $p+q=2$ with respect to $\omega_0$. 
\end{enumerate}
\end{cor}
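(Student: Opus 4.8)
The plan is to prove the two parts by different means: (2) is a direct specialization of Timorin's theorem, while (1) rests on an elementary positivity lemma together with Michelsohn's duality between positive $(1,1)$-forms and strictly positive $(n-1,n-1)$-forms.

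For (2) I would set $k=2$ in Proposition \ref{prop1.2}. The data are then the $n-2$ positive $(1,1)$-forms $\omega_1,\dots,\omega_{n-2}$, their product $\omega_1\wedge\cdots\wedge\omega_{n-2}$ is the $(n-2,n-2)$-form playing the role of $\Omega_0$, and the degrees with $p+q=2$ are exactly those the proposition covers. The conclusion that $\omega_1\wedge\cdots\wedge\omega_{n-2}$ is a Hodge-Riemann form with respect to $\omega_0$ for all such $(p,q)$ is then immediate, with no further argument.

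For (1) the one point needing proof is the lemma that the wedge of $m$ positive definite $(1,1)$-forms on an $m$-dimensional complex vector space is a strictly positive multiple of the volume form. I would argue by induction on $m$, the case $m=1$ being clear. For the inductive step I would diagonalize the last factor as $\beta_m=\sqrt{-1}\sum_{k=1}^{m}\gamma_k\wedge\overline{\gamma_k}$ for a coframe $\gamma_1,\dots,\gamma_m$ of $(1,0)$-forms, so that $\beta_1\wedge\cdots\wedge\beta_m=\sum_k \beta_1\wedge\cdots\wedge\beta_{m-1}\wedge(\sqrt{-1}\,\gamma_k\wedge\overline{\gamma_k})$. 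Wedging with $\sqrt{-1}\,\gamma_k\wedge\overline{\gamma_k}$ annihilates every term involving $\gamma_k$ or $\overline{\gamma_k}$, so each summand is the wedge of the restrictions $\beta_j|_{\Ker\gamma_k}$ on the hyperplane $\Ker\gamma_k$, multiplied by $\sqrt{-1}\,\gamma_k\wedge\overline{\gamma_k}$. Since the restriction of a positive definite form to a subspace is again positive definite, the inductive hypothesis makes every summand a strictly positive multiple of the volume form, and summing over $k$ finishes the lemma.

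Granting the lemma, I would prove strict positivity of $\Psi:=\omega_0\wedge\cdots\wedge\omega_{n-2}$ by pairing it against an arbitrary nonzero $(1,0)$-covector $\gamma$: the same restriction argument gives $\Psi\wedge\sqrt{-1}\,\gamma\wedge\overline{\gamma}=(\omega_0|_{\Ker\gamma}\wedge\cdots\wedge\omega_{n-2}|_{\Ker\gamma})\wedge\sqrt{-1}\,\gamma\wedge\overline{\gamma}$, and the lemma applied to the $n-1$ positive definite forms $\omega_j|_{\Ker\gamma}$ on the $(n-1)$-dimensional space $\Ker\gamma$ shows this is strictly positive. As $\gamma\neq 0$ was arbitrary, $\Psi$ is strictly positive, and Michelsohn's result \cite[p.\ 279]{Michelsohn:82} that every strictly positive $(n-1,n-1)$-form equals $\omega^{n-1}/(n-1)!$ for a unique positive $(1,1)$-form $\omega$ yields the desired $\omega$. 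I expect no genuine obstacle; the only subtlety is the correct characterization of strict positivity for an $(n-1,n-1)$-form through the top-degree pairings $\Psi\wedge\sqrt{-1}\,\gamma\wedge\overline{\gamma}$ against all nonzero $(1,0)$-covectors, which is precisely what legitimizes both the reduction to the lemma and the appeal to Michelsohn's theorem.
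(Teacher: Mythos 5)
Your proposal is correct. For part (2) you do exactly what the paper does: the corollary is stated there as an immediate specialization of Proposition \ref{prop1.2} with $k=2$, and no further argument is given or needed. For part (1), however, you take a genuinely different route. The paper offers no written proof, but its intended derivation is again via Proposition \ref{prop1.2}, this time with $k=1$ applied to the $n-1$ forms $\omega_0,\dots,\omega_{n-2}$: the Hodge--Riemann property in degree $(1,0)$ says precisely that $\sqrt{-1}\,*\,(\alpha\wedge\bar\alpha\wedge\omega_0\wedge\cdots\wedge\omega_{n-2})>0$ for every nonzero $(1,0)$-covector $\alpha$, which is the strict positivity of the $(n-1,n-1)$-form; Michelsohn's theorem then produces $\omega$. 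You instead prove the positivity from scratch by an inductive lemma on wedges of positive definite $(1,1)$-forms, diagonalizing the last factor and restricting to the hyperplanes $\Ker\gamma_k$. Your induction is sound (restrictions of positive definite forms are positive definite, and each summand contributes a strictly positive multiple of the volume form), and your characterization of strict positivity of an $(n-1,n-1)$-form via the pairings $\Psi\wedge\sqrt{-1}\,\gamma\wedge\bar\gamma$ is the standard one that matches Michelsohn's duality. What your approach buys is that part (1) becomes elementary and independent of Timorin's theorem, which is appropriate since the positivity of a product of positive $(1,1)$-forms is a much softer fact than the Hodge--Riemann relations; what the paper's route buys is brevity, since both parts then flow from the single cited result. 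Both arguments rely on Michelsohn for the existence and uniqueness of the $(n-1)$-st root $\omega$.
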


This combined with   \cite[Cor.\ 8.5]{RossToma:19} implies the following
\begin{prop}
For any constant K\"ahler forms $\omega_1,\omega_2$ on $(V, \omega_0)$, the form
$$\omega_1^{n-2}+\omega_1\wedge \omega_2^{n-3}+\cdots \omega_2^{n-2}$$ 
is a Hodge-Riemann form for degree $(p,q)$ with $p+q=2$ with respect to $\omega_0$. 
\end{prop}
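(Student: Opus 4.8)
The plan is to fix the degree $(p,q)$ with $p+q=2$ and analyze the Hermitian form $Q$ attached to $\Omega_0:=\sum_{j=0}^{n-2}\omega_1^{n-2-j}\wedge\omega_2^j$, exploiting that the defining expression for $Q$ is \emph{linear} in $\Omega_0$. Each summand $\omega_1^{n-2-j}\wedge\omega_2^j$ is a product of $n-2$ positive $(1,1)$-forms, so by Corollary \ref{cor1.4}(1) it is strictly positive, and by Corollary \ref{cor1.4}(2) it is a Hodge-Riemann form for $(p,q)$ with respect to $\omega_0$. Thus $\Omega_0$ is strictly positive and is a \emph{sum} of Hodge-Riemann forms, and the whole question is whether the Hodge-Riemann property survives this summation.

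First I would dispose of the degrees $(2,0)$ and $(0,2)$. Here $\omega_0\notin\Lambda^{p,q}$, and $\alpha\wedge\omega_0\wedge\Omega_0$ has bidegree $(p+n-1,q+n-1)$, which vanishes identically since $\{p,q\}=\{2,0\}$ forces one index to exceed $n$. Hence $P^{p,q}=\Lambda^{p,q}$ and the Hodge-Riemann property reduces to positive-definiteness of $Q$ on all of $\Lambda^{p,q}$. Writing $Q_{\Omega_0}=\sum_j Q_{\omega_1^{n-2-j}\wedge\omega_2^j}$ and recalling that each summand is positive definite by Corollary \ref{cor1.4}(2), the sum of positive-definite forms is positive definite. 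So these degrees follow at once from Corollary \ref{cor1.4} together with linearity.

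The only substantive case is $(p,q)=(1,1)$. The first step is a reformulation: since $Q(\alpha,\omega_0)=-*(\alpha\wedge\omega_0\wedge\Omega_0)$ for real $\alpha$, the primitive space is exactly $P^{1,1}=\omega_0^{\perp}$, the $Q$-orthogonal complement of $\omega_0$; and since $\Omega_0$ is strictly positive one computes $Q(\omega_0,\omega_0)=-*(\omega_0^2\wedge\Omega_0)<0$. Therefore the Hodge-Riemann property for $(1,1)$ is \emph{equivalent} to the statement that $Q_{\Omega_0}$ has exactly one negative eigenvalue (Lorentzian signature) with $\omega_0$ in the negative direction. Each building block $Q_{\omega_1^{n-2-j}\wedge\omega_2^j}$ is such a Lorentzian form with $\omega_0$ timelike, again by Corollary \ref{cor1.4}(2), and the task is to show that their sum $Q_{\Omega_0}$ remains Lorentzian.

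The hard part is precisely this last step, and it is where \cite[Cor.\ 8.5]{RossToma:19} is indispensable: a sum of Lorentzian forms sharing a common timelike vector need \emph{not} be Lorentzian (two such forms can even sum to a negative-definite form), so the conclusion cannot come from signature bookkeeping alone and must use the special geometry of the summands. The conceptual reason it works is that these blocks form a compatible pencil; integrating $\omega_t=(1-t)\omega_1+t\omega_2$ gives
\[
\Omega_0=(n-1)\int_0^1\omega_t^{\,n-2}\,dt,\qquad \omega_t=(1-t)\omega_1+t\omega_2,
\]
exhibiting $\Omega_0$, up to scale, as an average of the K\"ahler powers $\omega_t^{\,n-2}$, each Hodge-Riemann with $\omega_0$ timelike along the \emph{connected} family $t\in[0,1]$. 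Recognizing $\Omega_0$ as the complete homogeneous (Schur) class $s_{(n-2)}(\omega_1,\omega_2)$ of the K\"ahler classes $\omega_1,\omega_2$, I would invoke \cite[Cor.\ 8.5]{RossToma:19}, whose content is exactly that such a class obeys the Hodge-Riemann bilinear relations; equivalently, that averaging over this compatible family preserves the Lorentzian signature. Combined with the elementary degrees handled above, this completes the proof, the single genuine obstacle being to rule out that the Lorentzian positivity is destroyed under the summation.
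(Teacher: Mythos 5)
Your proposal is correct and follows essentially the same route as the paper, which simply combines Corollary \ref{cor1.4} with the identification of $\omega_1^{n-2}+\omega_1^{n-3}\wedge\omega_2+\cdots+\omega_2^{n-2}$ as a Schur class and an appeal to \cite[Cor.\ 8.5]{RossToma:19}. Your additional remarks (the separate treatment of degrees $(2,0)$, $(0,2)$, the Lorentzian reformulation, and the averaging identity $\Omega_0=(n-1)\int_0^1\omega_t^{n-2}\,dt$) are accurate elaborations of the same argument rather than a different proof.
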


\begin{rmk}
It is known that the Hodge-Riemann property is not invariant under convex
    linear combinations. For example, fix any two K\"ahler forms $\omega_1$
    and $\omega_2$ on $\C^4$, $\omega_1^2+a\omega_2^2$ is not a
    Hodge-Riemann form for degree $(1,1)$ for certain positive values of
    $a$ (see  \cite[Rem.\ 9.3]{RossToma:19} and also 
    \cite[Rem.\ 3]{Timorin:98} for other examples).
\end{rmk}

Timorin's result motivates the following:
\begin{defi} \label{def:bhr}
    A hermitian metric $\omega$ on a complex manifold $X$ is said to be 
    \emph{balanced of Hodge-Riemann type} if the following hold:
    \begin{enumerate}
        \item we have an expression
$$
\frac{\omega^{n-1}}{(n-1)!}=\omega_0 \wedge \Omega_0
$$
            where  $\omega_0$ is a strongly positive real $(1,1)$  form on
            $X$ and  $\Omega_0$ is a real $(n-2,n-2)$;
        \item at every point $\Omega_0$ is a  Hodge-Riemann form for $(p,q)$, $p+q=2$;
        \item $\Omega_0$ and $\omega_0 \wedge \Omega_0$ are closed.
    \end{enumerate}
\end{defi}
Note that (3) is  equivalent to $\omega$ being balanced and $\Omega_0$ being closed.

\section{Bogomolov-Miyaoka-Yau inequality}
Below we show how
 the Donaldson-Uhlenbeck-Yau (resp.\ Hitchin-Simpson)
theorem relating stability of holomorphic (resp.\ Higgs) bundles to the
existence of Hermitian-Einstein type metrics results in a Chern class
inequality.  The main result is Corollary \ref{cor:bmy}. 
In this section, assume $(X,\omega)$ is a compact complex Hermitian manifold 
that satisfies items (1) and (3) of Definition \ref{def:bhr}, as well as
the Hodge-Riemann condition (2) for the case $(p,q)=(1,1)$. 

Recall that associated to  every coherent analytic  sheaf $\Ecal\to X$ 
is a holomorphic line bundle $\det\Ecal$.  The first Chern class of $\Ecal$ is
by definition
$c_1(\Ecal):=c_1(\det\Ecal)\in H^2(X,\ZBbb)\cap H^{1,1}_{\dbar}(X)$.  
We define the $\omega$-degree of $\Ecal$ by:
$$
\deg\Ecal:=\int_X c_1(\Ecal)\wedge\frac{\omega^{n-1}}{(n-1)!}=\int_X
c_1(\Ecal)\wedge\omega_0\wedge\Omega_0
$$
Because of the balanced condition, this is well-defined on the class of
$c_1(\Ecal)$.  The \emph{slope} of a (nonzero) torsion-free 
sheaf is 
$$
\mu(\Ecal)=\frac{\deg\Ecal}{\rank\Ecal}
$$
Then we say a holomorphic bundle $\Ecal\to X$ is \emph{$\omega$-stable}
 if $\mu(\Scal)<\mu(\Ecal)$ for every coherent subsheaf $\Scal\subset\Ecal$
 with $0<\rank\Scal<\rank\Ecal$.

A  \emph{Higgs bundle} 
on $X$ is a pair  $(\Ecal,\theta)$, where $\Ecal\to X$ is a holomorphic
bundle, $\theta$ is a holomorphic $1$-form with values in $\End\Ecal$, and
$\theta\wedge\theta=0$. 
We say that a Higgs bundle is $\omega$-stable
 if $\mu(\Scal)<\mu(\Ecal)$ for every coherent subsheaf $\Scal\subset\Ecal$
 with $0<\rank\Scal<\rank\Ecal$ \emph{and} $\theta(\Scal)\subset
 \Scal\otimes\Omega^1_X$, where $\Omega^1_X$ is the holomorphic cotangent
 sheaf of $X$. Thus,  stable vector bundles are a special case of stable
 Higgs bundles, where $\theta\equiv 0$. 
Finally, we say that $(\Ecal,\theta)$ is \emph{$\omega$-polystable} if
$(\Ecal,\theta)$ splits as a direct sum of Higgs subbundles, all with the
same slope.

Given a hermitian metric $h$ on $\Ecal$, let 
$\dbar_E+\partial_E$ denote the Chern connection of $(\Ecal,h)$, 
$\theta^\ast$ the hermitian adjoint of $\theta$ with respect to $h$. Thus,
$\theta^\ast$ is a $(0,1)$-form with values in $\End E$, satisfying
$\partial_E\theta^\ast=0$. We will consider the \emph{complex} connection
$$
D=\dbar_E+\partial_E+\theta+\theta^\ast
$$
and its curvature $F_D$. To be explicit, we will write:
$D=(\Ecal,\theta,h)$.  

\begin{defi} \label{def:HE}
    Fix a Higgs bundle $(\Ecal,\theta)$ on $X$.
    A hermitian metric $h$  on $\Ecal$ is called Hermitian-Einstein (HE) if
\begin{equation}\label{eqn:HE}
    \sqrt{-1} F_{(\Ecal,\theta, h)}\wedge \omega_0 \wedge \Omega_0 =
    \lambda\cdot \Id\cdot \omega_0^2 \wedge \Omega_0
\end{equation} 
for some constant $\lambda$. 
\end{defi}

Now we have the following generalized
Donaldson-Uhlenbeck-Yau, Hitchin-Simpson theorem. 
\begin{thm} \label{thm:HE}
$(\Ecal, \theta)$ is $\omega$-polystable if and only if it admits a HE
    metric. Moreover, if $(\Ecal,\theta)$ is $\omega$-stable, such a metric is unique up to scaling.
\end{thm}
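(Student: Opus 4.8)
The plan is to reduce the statement to the Donaldson--Uhlenbeck--Yau and Hitchin--Simpson theorems for Gauduchon metrics, after recasting the Hermitian--Einstein equation \eqref{eqn:HE} into a standard shape. First I would rewrite \eqref{eqn:HE}. Only the $(1,1)$-part of $F_{(\Ecal,\theta,h)}$ contributes to the wedge with the $(n-1,n-1)$-form $\omega_0\wedge\Omega_0$, and by the Hodge--Riemann condition for $(p,q)=(1,1)$ (Definition \ref{Defi:Hodge-Riemann}(1)) this part splits pointwise as $\sqrt{-1}F^{1,1}=c(F)\,\omega_0+F_P$, where $c(F)$ is an $\End\Ecal$-valued function and $F_P\in P^{1,1}\otimes\End\Ecal$, so that $F_P\wedge\omega_0\wedge\Omega_0=0$ by \eqref{eqn:primitive}. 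Hence \eqref{eqn:HE} is equivalent to the pointwise identity $c(F)=\lambda\,\Id$: the Hodge--Riemann contraction of $\sqrt{-1}F$ is a constant multiple of the identity. Taking traces, integrating against $\omega_0^2\wedge\Omega_0$, and using $\tr[\theta,\theta^\ast]=0$ together with the closedness of $\omega_0\wedge\Omega_0$ from Definition \ref{def:bhr}(3), Chern--Weil identifies $\lambda$ as a topological constant proportional to the slope $\mu(\Ecal)$ (explicitly $\lambda=2\pi\mu(\Ecal)/V$ with $V=\int_X\omega_0^2\wedge\Omega_0$, in the standard normalization); in particular it is governed by exactly the slope used to define stability.

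Second, I would record the analytic framework. The operator $c(\cdot)$ is a pointwise, self-adjoint, and---by the positivity clause (2) of Definition \ref{Defi:Hodge-Riemann}---positive contraction on $(1,1)$-forms, so the linearization of the left-hand side of \eqref{eqn:HE} in $h$ is a strictly elliptic second-order operator of Laplace type with the correct sign. Combined with the fact that $\omega$ is balanced, so that $\deg$ is well defined on the class $c_1(\Ecal)$, this places the problem within the hypotheses of the Kobayashi--Hitchin correspondence for Gauduchon metrics and of its Higgs version.

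With these identifications the two implications follow the standard arguments. For the direction HE $\Rightarrow$ polystable, I would run the usual Chern--Weil and integration-by-parts computation: for a $\theta$-invariant saturated subsheaf $\Scal$, the Gauss--Codazzi formula together with the closedness of $\omega_0\wedge\Omega_0$ gives $\mu(\Scal)\le\mu(\Ecal)$, with equality only if the second fundamental form vanishes, whence $\Scal$ splits off as a Higgs subbundle of the same slope; iterating yields polystability, and on a stable bundle it shows no such $\Scal$ exists, from which uniqueness of the HE metric up to scale follows by the standard maximum-principle argument applied to $h_1^{-1}h_2$.

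For the converse, polystable $\Rightarrow$ HE, I would either invoke the cited Gauduchon-metric correspondence directly or re-run the Uhlenbeck--Yau continuity method (equivalently the Donaldson heat flow): the a priori $C^0$-estimate, the construction of a destabilizing weak subsheaf from a divergent flow, and the lower semicontinuity of the degree all carry over once the ellipticity and positivity above are in hand. The main obstacle is precisely this converse, because the Hodge--Riemann contraction $c(\cdot)$ is genuinely different from the metric contraction $\Lambda_{\omega_0}$ unless $\Omega_0$ is a power of $\omega_0$; one must verify that the Uhlenbeck--Yau weak-subsheaf machinery and the convexity of the Donaldson functional still apply, and it is the Hodge--Riemann positivity (2), not mere balancedness, that secures this.
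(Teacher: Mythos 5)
Your proposal stalls exactly where the theorem has content: you flag as ``the main obstacle'' the worry that the Hodge--Riemann contraction $c(\cdot)$ differs from a metric contraction, and then leave it at ``one must verify that the Uhlenbeck--Yau weak-subsheaf machinery and the convexity of the Donaldson functional still apply.'' That obstacle is not actually there, and the missing observation is the one that makes the whole theorem nearly immediate. By Definition \ref{def:bhr}(1) we have $\omega_0\wedge\Omega_0=\omega^{n-1}/(n-1)!$, so the left-hand side of \eqref{eqn:HE} is
\begin{equation*}
\sqrt{-1}\,F_{(\Ecal,\theta,h)}\wedge\omega_0\wedge\Omega_0
=\sqrt{-1}\,F_{(\Ecal,\theta,h)}\wedge\frac{\omega^{n-1}}{(n-1)!}
=\bigl(\sqrt{-1}\,\Lambda_\omega F_{(\Ecal,\theta,h)}\bigr)\,\frac{\omega^{n}}{n!},
\end{equation*}
i.e.\ the \emph{ordinary} trace of the curvature with respect to the hermitian metric $\omega$ itself (not $\omega_0$). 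No re-examination of the continuity method or of weak subsheaves is needed, and the ellipticity/positivity you attribute to clause (2) of Definition \ref{Defi:Hodge-Riemann} in fact comes simply from $\omega$ being a positive $(1,1)$-form; the paper's proof of this theorem uses only items (1) and (3) of Definition \ref{def:bhr}, with the Hodge--Riemann positivity reserved for Corollary \ref{cor:bmy} and the flatness arguments later.

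What \emph{does} require an argument --- and what your proposal omits --- is matching the right-hand side. Invoking Li--Yau \cite{LiYau:87} as extended by L\"ubke--Teleman \cite{LubkeTeleman:06} for the Gauduchon metric $\omega$ produces $\widetilde h$ with $\sqrt{-1}F_{(\Ecal,\theta,\widetilde h)}\wedge\omega^{n-1}/(n-1)!=\widetilde\lambda\,\Id\cdot\omega^{n}/n!$, whereas \eqref{eqn:HE} asks for $\lambda\,\Id\cdot\omega_0^2\wedge\Omega_0=\lambda f\,\Id\cdot\omega^{n}/n!$, where $f>0$ is the (generally nonconstant) density defined by $\omega_0\wedge\omega^{n-1}/(n-1)!=f\,\omega^{n}/n!$. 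The paper bridges this by a conformal change $h=e^{\varphi}\widetilde h$ with $\Delta_\omega\varphi=2(\lambda f-\widetilde\lambda)$, which is solvable precisely because the normalization \eqref{eqn:lambda} makes the right-hand side have zero mean; the balanced condition enters through Lemma \ref{Lemma:Kahler identity}, giving $\Delta_\omega=2\Delta_{\dbar}$ on functions and hence $-\sqrt{-1}\,\partial\dbar\varphi\wedge\omega^{n-1}/(n-1)!=\tfrac12\Delta_\omega\varphi\cdot\omega^{n}/n!$, so that the conformal change shifts the contracted curvature by exactly $\lambda f-\widetilde\lambda$. Your identification of the constant $\lambda=2\pi\mu(\Ecal)/\int_X\omega_0^2\wedge\Omega_0$ agrees with \eqref{eqn:lambda}, and your sketch of the easy direction (HE $\Rightarrow$ polystable via Gauss--Codazzi and Chern--Weil, which the paper records as well known) is fine; both reduce to the standard balanced-metric computations once the identity $\omega_0\wedge\Omega_0=\omega^{n-1}/(n-1)!$ is in hand.
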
\label{thm1.1}

We will use the key fact that for balanced metrics, the 
K\"ahler identities hold for $(1,0)$ and $(0,1)$
forms  (\cite[Prop.\ 1]{Gauduchon:77}; see also  
 \cite[Lemma 7.1.1]{LubkeTeleman:95}). 

\begin{lem}  \label{Lemma:Kahler identity}
Given an $n$-dimensional hermitian manifold $(X, \omega)$ with
    $d\omega^{n-1}=0$, the following hold:
$$
\db^*\alpha^{0,1}=-\sqrt{-1}\Lambda\partial\alpha^{0,1}\quad , \quad
    \partial^*\alpha^{1,0}=\sqrt{-1}\Lambda\dbar\alpha^{1,0}
$$
for any $(0,1)$-form $\alpha^{0,1}$, and any $(1,0)$-form $\alpha^{1,0}$.
\end{lem}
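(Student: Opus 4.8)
The plan is to prove the two identities by reducing them to the standard K\"ahler identities via a pointwise algebraic argument, exploiting that the balanced condition $d\omega^{n-1}=0$ forces the relevant torsion terms to drop out precisely in bidegrees $(0,1)$ and $(1,0)$. Recall that on a general hermitian manifold the K\"ahler identities acquire correction terms built from the torsion of $\omega$, i.e.\ from $d\omega$; these are the Demailly-type formulas relating $\db^*$ to $\Lambda\partial$ together with a zeroth-order torsion operator. So the first step is to write down these general (torsion-corrected) identities and isolate the extra terms. The key point will be that the torsion contributions are contractions against $d\omega$ (or equivalently $\partial\omega$ and $\dbar\omega$), whereas the hypothesis only gives $d\omega^{n-1}=0$, a much weaker condition; one must check that on $(0,1)$- and $(1,0)$-forms the correction happens to vanish under exactly this weaker hypothesis.

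Concretely, I would proceed as follows. First, fix a point and choose normal-type coordinates so that $\omega$ agrees with the standard form to first order, reducing every statement to a computation among constant-coefficient forms and their first derivatives. Second, express $\db^*$ acting on a $(0,1)$-form using the formula $\db^* = -*\,\partial\,*$ (up to the usual sign, with $*$ the Hodge star for $\omega$) and compare with $-\sqrt{-1}\,\Lambda\partial$ applied to the same form; the discrepancy is a pure torsion term. Third, and this is the crux, show that for a $(0,1)$-form the torsion term is a contraction that only sees $d\omega^{n-1}$ rather than $d\omega$ itself. The mechanism is that $\db^*\alpha^{0,1}$ is a function (a $(0,0)$-form), and pairing the defining integral $\langle \db^*\alpha^{0,1}, f\rangle = \langle \alpha^{0,1}, \db f\rangle$ against test functions $f$ lets one integrate by parts using $\omega^{n-1}/(n-1)!$ as the volume-form factor; the balanced condition $d(\omega^{n-1})=0$ is exactly what is needed to move the exterior derivative across without generating a boundary-torsion term. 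The $(1,0)$ identity follows by the identical argument applied to $\partial^*$ on $(1,0)$-forms, or by conjugation.

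The honest alternative, which I expect to be cleaner, is simply to quote the computation of Gauduchon and Demailly: the general hermitian K\"ahler identity reads $\db^* = -\sqrt{-1}[\Lambda,\partial] + (\text{terms involving }\bar\tau,\,[\Lambda,\partial\omega\wedge\,\cdot\,])$, and one checks term by term that each correction, when restricted to a $(0,1)$-form, is proportional to $\Lambda$ applied to a multiple of $\dbar\omega\wedge\alpha^{0,1}$ or similar, which is controlled by $d\omega^{n-1}=0$ after taking the appropriate trace. I would organize the proof to make the bidegree bookkeeping transparent: a $(0,1)$-form wedged or contracted with the torsion lands in a bidegree where the only surviving contribution is the full-trace $\Lambda^{n-1}$-type contraction, and that is exactly $d\omega^{n-1}$ up to a combinatorial constant.

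The main obstacle will be the third step: verifying that the torsion correction for the \emph{balanced} (rather than K\"ahler) condition genuinely vanishes on these two bidegrees, and not merely that it is ``small.'' The subtlety is that balanced is strictly weaker than K\"ahler, so the identities certainly fail on higher-degree forms; the content of the lemma is the exact coincidence in degrees $(0,1)$ and $(1,0)$. I would nail this down by the integration-by-parts characterization above, where $d\omega^{n-1}=0$ enters as the precise hypothesis that kills the boundary term, rather than by trying to match Demailly's pointwise torsion operator term by term, which is error-prone. Since this is a known result (\cite[Prop.\ 1]{Gauduchon:77}, \cite[Lemma 7.1.1]{LubkeTeleman:95}), I would present the argument as a self-contained verification via the adjoint/integration-by-parts route and cite those sources for the general framework.
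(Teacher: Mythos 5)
Your integration-by-parts argument is correct and is precisely the proof in the references the paper cites (Gauduchon; L\"ubke--Teleman): for a $(0,1)$-form $\alpha$ one pairs $\db^*\alpha$ against a test function $f$, rewrites $\langle\alpha,\db f\rangle\,\dVol$ as a universal constant times $\alpha\wedge\partial\bar f\wedge\omega^{n-1}$ (every $1$-form is primitive, so this pointwise identity has no correction terms), and applies Stokes; the only extra term is $\bar f\,\alpha\wedge\partial(\omega^{n-1})$, which vanishes exactly under the hypothesis $d\omega^{n-1}=0$, leaving $-\bar f\,\partial\alpha\wedge\omega^{n-1}=-\bar f\,(\Lambda\partial\alpha)\,\omega^{n}/n$ up to the normalization, and the $(1,0)$ case follows by conjugation. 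The paper itself offers no proof beyond the citation, so your ``third step'' is the entire content of the lemma; the preliminary detours through normal coordinates and Demailly's torsion operators are unnecessary and can be dropped.
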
 

\begin{proof}[Proof of Theorem \ref{thm:HE}]
That the existence of a HE metric implies polystability is well-known.
    For the converse, it suffices to  assume $(\Ecal,\theta)$ is
    $\omega$-stable. 
Since  $\omega$ is balanced, it is in particular
Gauduchon, and so by the result of Li-Yau \cite{LiYau:87}, generalized to
    Higgs bundles by L\"ubke-Teleman \cite{LubkeTeleman:06}, 
    there is a metric $\widetilde h$ such that
$$
    \sqrt{-1} F_{(\Ecal,\theta,\widetilde h)}\wedge \frac{\omega^{n-1}}{(n-1)!} =
    \widetilde\lambda\cdot \Id\cdot \frac{\omega^{n}}{n!}
    $$
    where $\widetilde \lambda = 2\pi \mu(\Ecal)/\vol(X,\omega)$.
    Now there is a positive function $f$ such that
    $$
\omega_0\wedge \frac{\omega^{n-1}}{(n-1)!} = f\cdot\frac{\omega^{n}}{n!}
    $$
    Choose $\lambda$ such that
    \begin{equation} \label{eqn:lambda} 
    \lambda\int_X f\cdot \frac{\omega^{n}}{n!} =\widetilde\lambda
    \vol(X,\omega)=2\pi \mu(\Ecal)
    \end{equation} 
    Then we can find a function $\varphi$ satisfying: 
    $
    \Delta_\omega \varphi=2(\lambda f-\widetilde\lambda)
    $.
Let $h=e^\varphi\widetilde h$. Then 
$$
    F_{(\Ecal,\theta,h)}=F_{(\Ecal,\theta,\widetilde
    h)}-\partial\dbar\varphi\cdot \Id
$$
    By Lemma \ref{Lemma:Kahler identity},  the Hodge and Dolbeault
    laplacians on functions are related: $\Delta_\omega=2\Delta_{\dbar}$. 
    Hence,
    $$
    -i\partial\dbar\varphi\wedge
    \frac{\omega^{n-1}}{(n-1)!}=\frac{1}{2}\Delta
    \varphi\frac{\omega^{n}}{n!}=\lambda
    \omega_0\wedge\frac{\omega^{n-1}}{(n-1)!} -\widetilde\lambda \frac{\omega^{n}}{n!}  
    $$
    The result follows.
\end{proof}

As a direct corollary of this, we have the following generalized Bogomolov-Miyaoka-Yau inequality 
\begin{cor}\label{cor:bmy}
For any $\omega$-polystable rank $r$ Higgs bundle $(\E, \theta)$,  the following
    inequality holds:
$$
\int_X (2r c_2(\E)-(r-1) c_1(\E)^2) \wedge \Omega_0 \geq 0
$$
where the equality holds if and only if $\E$ is projectively flat. 
\end{cor}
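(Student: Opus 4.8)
The plan is to feed the $\omega$-polystable pair into Theorem~\ref{thm:HE}, convert the characteristic-class integrand into a Chern--Weil expression in the trace-free curvature, and then read off the sign pointwise from the Hodge--Riemann relations of Definition~\ref{Defi:Hodge-Riemann}.

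First I would invoke Theorem~\ref{thm:HE}: since $(\E,\theta)$ is $\omega$-polystable it carries a Hermitian--Einstein metric $h$. Write $F=F_{(\E,\theta,h)}$ for the curvature of the complex connection $D$ and $\hat F=F-\tfrac1r(\tr F)\,\Id$ for its trace-free part. A direct Chern--Weil computation gives the pointwise identity
\[
2r\,c_2(\E)-(r-1)\,c_1(\E)^2=\frac{r}{4\pi^2}\,\tr(\hat F\wedge\hat F),
\]
valid for the curvature of any connection; since $\Omega_0$ is closed, the left-hand integral against $\Omega_0$ equals $\tfrac{r}{4\pi^2}\int_X\tr(\hat F\wedge\hat F)\wedge\Omega_0$, so it suffices to prove this integrand is pointwise nonnegative and to identify when it vanishes.

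Next I would use the type decomposition of the Hitchin--Simpson curvature. Because $(\E,\theta)$ is a genuine Higgs bundle one has $F^{2,0}=\partial_E\theta$ and $F^{0,2}=\dbar_E\theta^\ast=(F^{2,0})^\ast$, while only $F^{1,1}$ enters the equation \eqref{eqn:HE}. Since $\Omega_0$ has bidegree $(n-2,n-2)$, only the $(2,2)$-parts survive after wedging, so that
\[
\tr(\hat F\wedge\hat F)\wedge\Omega_0=\bigl(\tr(\hat F^{1,1}\wedge\hat F^{1,1})+2\,\tr(\hat F^{2,0}\wedge\hat F^{0,2})\bigr)\wedge\Omega_0.
\]
Taking the trace of \eqref{eqn:HE} shows $\hat F^{1,1}\wedge\omega_0\wedge\Omega_0=0$, i.e.\ $\hat F^{1,1}$ is pointwise primitive in the sense of \eqref{eqn:primitive}, while every $(2,0)$-form is automatically primitive for degree reasons. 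Choosing a local unitary frame and setting $G=\sqrt{-1}\,\hat F^{1,1}=(G_{ij})$ (a Hermitian, primitive, $\End E$-valued $(1,1)$-form) and $\psi=\hat F^{2,0}=(\psi_{ij})$, the relation $\hat F^{0,2}=(\hat F^{2,0})^\ast$ turns the two terms into $\sum_{i,j}Q(G_{ij},G_{ij})$ and $2\sum_{i,j}Q(\psi_{ij},\psi_{ij})$ times the volume form, once one accounts for the factors $(\sqrt{-1})^{p-q}(-1)^{(p+q)(p+q-1)/2}$ in the definition of $Q$. By the positivity in Definition~\ref{Defi:Hodge-Riemann}(2) for $(p,q)=(1,1)$ and $(p,q)=(2,0)$ (available here via Corollary~\ref{cor1.4} for the multipolarization forms), each summand is nonnegative, which gives the inequality. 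Equality forces every $Q(G_{ij},G_{ij})$ and $Q(\psi_{ij},\psi_{ij})$ to vanish; by positive-definiteness this means $\hat F\equiv 0$, i.e.\ $F=\tfrac1r(\tr F)\,\Id$ is central, which is exactly projective flatness of $\E$.

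The step I expect to be the main obstacle is the off-diagonal $(2,0)$--$(0,2)$ contribution $2\tr(\hat F^{2,0}\wedge\hat F^{0,2})\wedge\Omega_0$. Unlike the K\"ahler case, where positivity of $\omega^{n-2}$ renders this a manifest squared norm $|\partial_E\theta|^2$, here its nonnegativity genuinely requires the Hodge--Riemann positivity of $\Omega_0$ in bidegree $(2,0)$ (equivalently, a suitable positivity of $\Omega_0$ against forms $\eta\wedge\bar\eta$), which is why the argument calls on the full Hodge--Riemann hypothesis and not merely the $(1,1)$ case sufficient for Theorem~\ref{thm:HE}. Verifying the constant $\tfrac{r}{4\pi^2}$ and the compatibility of all the $\sqrt{-1}$- and sign-conventions, so that the $(1,1)$ and $(2,0)$ contributions acquire the same sign, is the remaining delicate bookkeeping.
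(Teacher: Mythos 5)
Your proposal follows the same route as the paper's (very terse) proof: produce the Hermitian--Einstein metric via Theorem~\ref{thm:HE}, observe that the trace-free part $\hat F$ of the Hitchin--Simpson curvature is primitive, identify $2rc_2-(r-1)c_1^2$ with $\tfrac{r}{4\pi^2}\tr(\hat F\wedge\hat F)$, and conclude by Hodge--Riemann positivity. Where you go beyond the paper is in isolating the $(2,0)$--$(0,2)$ contribution: since $F_{(\E,\theta,h)}^{2,0}=\partial_E\theta$ is generally nonzero for $\theta\neq 0$, the term $2\tr(\hat F^{2,0}\wedge\hat F^{0,2})\wedge\Omega_0$ genuinely appears, and its sign is \emph{not} controlled by the hypotheses the paper declares at the start of its Section on the BMY inequality (items (1) and (3) of Definition~\ref{def:bhr} plus the Hodge--Riemann condition only for $(p,q)=(1,1)$). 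You are right that one needs, in addition, positivity of $Q$ on $(2,0)$-forms --- equivalently $\alpha\wedge\bar\alpha\wedge\Omega_0\geq 0$ for $(2,0)$-forms $\alpha$, which holds for multipolarizations by Proposition~\ref{prop1.2}/Corollary~\ref{cor1.4}, or under strong positivity of $\Omega_0$ --- and that the Weil-operator factors in the definition of $Q$ are exactly what make the $(1,1)$ and $(2,0)$ contributions enter with the same sign (as they do in Simpson's K\"ahler computation). So your argument is correct and, for genuine Higgs bundles, more complete than the paper's two-line sketch, which only addresses the primitive $(1,1)$ part; the only loose end you leave is the sign bookkeeping you flag, which does close up, and the trivial converse direction of the equality statement.
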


\begin{proof}
    Eq.\ \eqref{eqn:HE} implies
    $$
    F_{(\Ecal,\theta,h)}-\frac{1}{r}\tr(F_{(\Ecal,\theta,h)})\cdot\Id\cdot\, \omega_0
    $$
    is primitive. Now use the Hodge-Riemann property of $\Omega_0$. 

\end{proof}

For emphasis, we state the following
 version of the Donaldson-Uhlenbeck-Yau theorem for the 
 slope stability condition defined by  multipolarizations (cf.\
\cite{GrebToma:17}).  

\begin{thm}
Suppose $X$ is a compact K\"ahler manifold with $(n-1)$ K\"ahler forms 
    $\omega_0, \cdots, \omega_{n-2}$. Given a  holomorphic
    vector bundle $\E$ that is slope stable  with respect to $[\omega_0]\cup\cdots\cup
    [\omega_{n-2}]$, there exists a Hermitian-Einstein metric $h$
    on $\E$, i.e.\ 
$$
    \sqrt{-1}F_{(\Ecal, h)} \wedge\omega_0\wedge \cdots\wedge \omega_{n-2} = \lambda\cdot
    \Id\cdot\,  \omega^2_0\wedge \omega_1 \wedge\cdots\wedge \omega_{n-2}
$$ 
for some constant $\lambda$. Moreover, such a metric is unique up to constant rescalings. 
\end{thm}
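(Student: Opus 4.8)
The plan is to realize this statement as a direct specialization of Theorem \ref{thm:HE} to the case $\theta=0$, once the multipolarization data $\omega_0,\ldots,\omega_{n-2}$ is repackaged as a balanced metric of Hodge-Riemann type. First I would invoke Corollary \ref{cor1.4}(1): since the $\omega_i$ are positive $(1,1)$-forms, the product $\omega_0\wedge\cdots\wedge\omega_{n-2}$ is a strictly positive $(n-1,n-1)$-form, so there is a positive $(1,1)$-form $\omega$ with $\frac{\omega^{n-1}}{(n-1)!}=\omega_0\wedge\cdots\wedge\omega_{n-2}$. Setting $\Omega_0:=\omega_1\wedge\cdots\wedge\omega_{n-2}$, I would check that $(X,\omega)$ satisfies exactly the hypotheses imposed at the beginning of this section, namely items (1) and (3) of Definition \ref{def:bhr} together with the Hodge-Riemann condition (2) in the case $(p,q)=(1,1)$. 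Item (1) is the displayed expression, with $\omega_0$ strongly positive (each $\omega_i$ is Kähler) and $\Omega_0$ real of type $(n-2,n-2)$; item (3) holds because each $\omega_i$ is closed, so both $\Omega_0$ and $\omega_0\wedge\Omega_0=\frac{\omega^{n-1}}{(n-1)!}$ are closed; and the Hodge-Riemann condition is precisely Corollary \ref{cor1.4}(2), itself a consequence of Proposition \ref{prop1.2}.

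The second step is to match the two stability notions. The slope in the multipolarization sense uses the degree $\int_X c_1(\cdot)\wedge\omega_0\wedge\cdots\wedge\omega_{n-2}$, and since $\omega_0\wedge\cdots\wedge\omega_{n-2}=\frac{\omega^{n-1}}{(n-1)!}$ this coincides term by term with the $\omega$-degree of this section. Because all the $\omega_i$ are closed, the integral depends only on the cohomology classes of $c_1$ and of the $[\omega_i]$, so the degree of every coherent subsheaf $\mathcal{S}\subset\mathcal{E}$ agrees under both conventions. Hence slope stability with respect to $[\omega_0]\cup\cdots\cup[\omega_{n-2}]$ is literally $\omega$-stability in the sense used here, and no new destabilizing subsheaves can appear under the translation.

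With these identifications in place, I would apply Theorem \ref{thm:HE} to the Higgs bundle $(\mathcal{E},0)$. An $\omega$-stable bundle is in particular $\omega$-polystable, so the theorem yields a Hermitian-Einstein metric $h$, and its uniqueness clause in the stable case gives uniqueness up to constant rescaling. It then remains only to read off the equation: with $\theta=0$ the curvature $F_{(\mathcal{E},\theta,h)}$ reduces to the Chern curvature $F_{(\mathcal{E},h)}$, and substituting $\Omega_0=\omega_1\wedge\cdots\wedge\omega_{n-2}$ into \eqref{eqn:HE} produces precisely $\sqrt{-1}F_{(\mathcal{E},h)}\wedge\omega_0\wedge\cdots\wedge\omega_{n-2}=\lambda\cdot\Id\cdot\,\omega_0^2\wedge\omega_1\wedge\cdots\wedge\omega_{n-2}$.

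I do not expect a serious obstacle, since all of the genuinely analytic input — the continuity/heat-flow machinery behind the Donaldson-Uhlenbeck-Yau and Li-Yau existence results — is already absorbed into Theorem \ref{thm:HE}. The only points requiring care are bookkeeping: confirming that the Hodge-Riemann hypothesis genuinely transfers from Timorin's theorem to this particular product form via Corollary \ref{cor1.4}, and confirming that the multipolarization degree on subsheaves coincides with the $\omega$-degree. Neither step involves new estimates, so the essential content of the argument is the verification that the hypotheses of this section are met rather than any fresh analysis.
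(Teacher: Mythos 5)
Your proposal is correct and matches the paper's intent exactly: the paper states this theorem without a separate proof, treating it as the specialization of Theorem \ref{thm:HE} (with $\theta=0$) to the balanced metric of Hodge-Riemann type produced from the multipolarization via Corollary \ref{cor1.4} and Timorin's theorem. Your verification of the standing hypotheses of that section and of the agreement of the two degree/stability notions is precisely the bookkeeping the paper leaves implicit.
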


\begin{rmk}
    We emphasize here that the Chern connection  of $(\Ecal,h)$ 
    is \emph{not} a Yang-Mills connection in general.
\end{rmk}

By Corollary \ref{cor:bmy} and Proposition \ref{prop1.2}, we have
the following generalization of  the  Bogomolov-Gieseker inequality for
multipolarizations, proven in the projective case by Miyaoka
\cite[Cor.\ 4.7]{Miyaoka:87}.  

\begin{cor}
Suppose $X$ is a compact K\"ahler manifold with $(n-1)$ K\"ahler forms
    $\omega_0, \cdots ,\omega_{n-2}$, and $\E$ is a slope stable
    holomorphic vector bundle over $X$ with respect to $[\omega_0]\cup
    \cdots\cup[\omega_{n-2}]$. Then the following holds: 
$$
\int_X (2r c_2(\E)-(r-1) c_1^2(\E)) \wedge \Omega_j \geq 0
$$
for any $j=0,\ldots, n-2$. Here, $\Omega_j=\omega_0 \wedge \cdots\wedge \omega_{j-1} \wedge
    \omega_{j+1} \cdots\wedge \omega_{n-2}$. Moreover, 
    the equality holds for some $j$ if and only if $\E$ is projectively flat. 
\end{cor}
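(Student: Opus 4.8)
The plan is to reduce the statement to Corollary~\ref{cor:bmy} by exhibiting, for each index $j$, a balanced Hodge-Riemann structure on $X$ in which $\omega_j$ plays the role of the polarizing $(1,1)$-form and $\Omega_j$ the role of the transverse $(n-2,n-2)$-form. First I would fix $j\in\{0,\dots,n-2\}$ and check the standing hypotheses of Section~3 for the pair $(\omega_j,\Omega_j)$. Since each $\omega_i$ is K\"ahler, $\Omega_j$ is a real, closed $(n-2,n-2)$-form and $\omega_j\wedge\Omega_j=\omega_0\wedge\cdots\wedge\omega_{n-2}$ is closed, giving condition~(3) of Definition~\ref{def:bhr}; by Corollary~\ref{cor1.4}(1) this product is strictly positive, so there is a positive $(1,1)$-form $\omega$ with $\frac{\omega^{n-1}}{(n-1)!}=\omega_j\wedge\Omega_j$, giving condition~(1); and by Corollary~\ref{cor1.4}(2) (equivalently Proposition~\ref{prop1.2} with $k=2$), applied pointwise, $\Omega_j$ is a Hodge-Riemann form for $(1,1)$ with respect to $\omega_j$, which is the condition needed to run the arguments of Section~3.

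The key observation is that the resulting metric $\omega$, and hence the entire stability theory, is independent of $j$: the product $\omega_0\wedge\cdots\wedge\omega_{n-2}$ does not single out any factor, so the $\omega$-degree $\int_X c_1(\cdot)\wedge\omega_0\wedge\cdots\wedge\omega_{n-2}$ is precisely the multipolarization degree for $[\omega_0]\cup\cdots\cup[\omega_{n-2}]$. Consequently slope stability for the multipolarization coincides with $\omega$-stability, and $\E$ is $\omega$-stable (in particular $\omega$-polystable) with the \emph{same} $\omega$ for every $j$. I would then apply Corollary~\ref{cor:bmy} to the Higgs bundle $(\E,0)$ equipped with the structure $(\omega_j,\Omega_j)$; because the transverse form is now $\Omega_j$, this produces exactly $\int_X\bigl(2rc_2(\E)-(r-1)c_1^2(\E)\bigr)\wedge\Omega_j\geq 0$.

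For the equality statement I would invoke the equality clause of Corollary~\ref{cor:bmy}: for each fixed $j$, equality against $\Omega_j$ holds if and only if $\E$ is projectively flat. As projective flatness is a property of $\E$ with no reference to $j$, equality for some $j$ is equivalent to projective flatness, which in turn forces equality for all $j$. The mechanism behind this uniformity is that the trace-free curvature $F^0$ of the Hermitian-Einstein metric is conformally invariant and so is determined by $\omega$ alone: the Hermitian-Einstein equation gives $F^0\wedge\omega_0\wedge\cdots\wedge\omega_{n-2}=0$, i.e.\ a single $F^0$ that is simultaneously primitive for every $(\omega_j,\Omega_j)$, and positive-definiteness of $Q$ on primitive $(1,1)$-forms yields both the inequality and the equality characterization.

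I expect the only genuine subtlety to be this bookkeeping: the inequalities for different $j$ are not independent BMY estimates on different manifolds, but a family of Hodge-Riemann pairings applied to the one trace-free curvature furnished by the multipolarization. Confirming that Timorin's theorem supplies the Hodge-Riemann property for every choice of polarizing factor $\omega_j$, together with the $j$-independence of the stability hypothesis, is the crux; the remainder is a direct citation of Corollary~\ref{cor:bmy}.
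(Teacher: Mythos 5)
Your proof is correct and follows essentially the same route as the paper, whose entire justification is the citation of Corollary \ref{cor:bmy} together with Timorin's theorem (Proposition \ref{prop1.2}); you have supplied exactly the intended bookkeeping, namely that each pair $(\omega_j,\Omega_j)$ satisfies the standing hypotheses of Section 3, that the induced metric $\omega$ with $\omega^{n-1}/(n-1)!=\omega_0\wedge\cdots\wedge\omega_{n-2}$ and hence the stability hypothesis are independent of $j$ and agree with multipolarization stability, and that the equality case reduces to the $j$-independent notion of projective flatness. Your further observation that one and the same trace-free curvature is simultaneously primitive for every pair $(\omega_j,\Omega_j)$ is correct and slightly sharper than what the paper records, but it is not needed beyond the equality clause of Corollary \ref{cor:bmy}.
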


\begin{rmk}
The gauge theoretic side of the HE connections defined via multipolarizations 
    is studied in \cite{ChenWentworth:21a}. 
\end{rmk}

\section{The Sampson-Siu theorem}

In this section, we prove

\begin{thm} \label{thm:siu-sampson}
    Let $X$ be a compact complex manifold with a balanced metric of
    Hodge-Riemann type, and assume $\Omega_0$ is strongly positive. If $N$
    is a Riemannian manifold with nonpositive
    complexified sectional curvature, then every harmonic map $u:X\to N$ is
    pluriharmonic.
\end{thm}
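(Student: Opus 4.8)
The plan is to run the classical Bochner argument of Siu and Sampson, but to replace integration against $\omega^{n-2}/(n-2)!$ by integration against the closed, strongly positive $(n-2,n-2)$-form $\Omega_0$, invoking the Hodge-Riemann property of $\Omega_0$ precisely where the K\"ahler computation would use the Hodge-Riemann relations for $\omega$. First I would set up the pullback bundle $E=u^*TN\otimes\C$, equipped with the pullback $\nabla$ of the Levi-Civita connection of $N$ and its curvature $R=u^*R^N$; decompose $\nabla=\nabla'+\nabla''$ into $(1,0)$- and $(0,1)$-parts on $X$, and set $\theta=\partial u\in A^{1,0}(X,E)$, so that $\dbar u=\overline\theta$. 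Since the Levi-Civita connection is torsion free, the Hessian $\nabla du$ is symmetric, which yields the identity $\nabla''\theta=\nabla'\overline\theta$ of $E$-valued $(1,1)$-forms.

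The next step is to rewrite the harmonic map equation in complex form. The tension field is $\tr_g\nabla du$, and because $g$ is Hermitian this trace only sees the $(1,1)$-part of $\nabla du$. Here the balanced hypothesis enters through Lemma \ref{Lemma:Kahler identity}: the K\"ahler identities for $(1,0)$- and $(0,1)$-forms let one absorb the torsion terms that would otherwise obstruct the non-K\"ahler computation and identify the tension field with a multiple of $\Lambda_\omega\nabla''\theta$. Thus $u$ is harmonic if and only if $\nabla''\theta$ is primitive, i.e.
\[
\nabla''\theta\wedge\omega_0\wedge\Omega_0=0,
\]
so that each $E$-component of $\nabla''\theta$ lies in the primitive space $P^{1,1}$ of \eqref{eqn:primitive}. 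Pluriharmonicity is the stronger statement $\nabla''\theta=0$, so the task is to upgrade primitivity to vanishing.

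The heart of the matter is Siu's pointwise Bochner identity for the real $(1,1)$-form $\phi=\sqrt{-1}\,h(\theta\wedge\overline\theta)$ built from the pullback metric $h$ on $E$; differentiating and using $\nabla''\theta=\nabla'\overline\theta$ together with the Bianchi identity expresses $\sqrt{-1}\,\partial\dbar\phi$ as a curvature $(2,2)$-form $\Theta$ (quartic in $\theta$ through $R^N$) minus a $(2,2)$-form $\Psi$ assembled from $\nabla''\theta$. I would then wedge this identity with $\Omega_0$ and integrate over the compact $X$. Because $\Omega_0$ is closed (Definition \ref{def:bhr}(3)), integration by parts gives $\int_X\sqrt{-1}\,\partial\dbar\phi\wedge\Omega_0=\int_X\phi\wedge\partial\dbar\Omega_0=0$, so that
\[
\int_X\Theta\wedge\Omega_0=\int_X\Psi\wedge\Omega_0.
\]

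It remains to sign the two integrals. On the right, $\Psi$ is built from the primitive form $\nabla''\theta$, so by the Hodge-Riemann positivity of $\Omega_0$ on $P^{1,1}$ (Definition \ref{def:bhr}(2)) the integrand equals, up to a positive constant, the Hodge-Riemann norm $Q(\nabla''\theta,\nabla''\theta)$; hence $\int_X\Psi\wedge\Omega_0\ge 0$, with equality if and only if $\nabla''\theta\equiv 0$. On the left, nonpositivity of the complexified sectional curvature of $N$ says precisely that $\Theta$ pairs nonpositively with decomposable strongly positive $(n-2,n-2)$-forms; since $\Omega_0$ is strongly positive it is a limit of nonnegative combinations of such forms, whence $\int_X\Theta\wedge\Omega_0\le 0$. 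Comparing forces $\int_X\Psi\wedge\Omega_0=0$ and therefore $\nabla''\theta=0$, i.e.\ $u$ is pluriharmonic. I expect the main obstacle to be exactly this final pointwise positivity bookkeeping: unlike the K\"ahler case, where $\Omega_0=\omega^{n-2}/(n-2)!$ furnishes an explicit orthonormal expansion, one must here exploit \emph{both} the strong positivity of $\Omega_0$ (to sign the quartic term $\Theta$ against the sectional-curvature hypothesis) \emph{and} its Hodge-Riemann property (to sign the quadratic term $\Psi$ on primitive forms), while checking that the torsion corrections generated in deriving the Bochner identity for a merely balanced metric are either primitive or exact, so that they drop out after wedging with the closed form $\Omega_0$.
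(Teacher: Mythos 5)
Your proposal is correct and follows essentially the same route as the paper: both convert harmonicity into primitivity of the $(1,1)$-part of the Hessian via Lemma \ref{Lemma:Kahler identity}, integrate the Siu--Sampson Bochner identity against the closed form $\Omega_0$, sign the quadratic term with the Hodge--Riemann property on primitive forms, and sign the curvature term by decomposing the strongly positive $\Omega_0$ into decomposable positive pieces against the nonpositive complexified sectional curvature. The only difference is cosmetic (you phrase the identity via $\sqrt{-1}\partial\dbar$ of $h(\theta\wedge\overline\theta)$ rather than $d$ of $\langle d_\nabla d^c u\wedge d^c u\rangle$ as in Toledo's presentation used by the paper).
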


In the statement of the theorem, 
$\omega$ satisfies the condition of 
 Definition \ref{def:bhr}, 
but  we make the \emph{additional assumption} that $\Omega_0$ is a \emph{strongly
positive} $(n-2,n-2)$-form in the sense of \cite[Ch.\ III, Def.\ 1.1]{Demailly}.

\begin{proof}[Proof of Theorem \ref{thm:siu-sampson}]
    Let $\nabla$ denote the Levi-Civit\`a connection on $N$. This induces a
    connection on $u^\ast TN$. The harmonic map equation is: $d_\nabla^\ast
    du=0$. Since $\omega$ is balanced, 
    Lemma \ref{Lemma:Kahler identity} implies that $u$ is harmonic if and
    only if 
    \begin{equation} \label{eqn:harmonic}  
    d_\nabla d^c u\wedge\omega^{n-1}=0
    \end{equation} 
    Next, we follow the argument in \cite[pp.\ 73-75]{ToledoBook}.
Since $\Omega_0$ is closed,
$$
    d\langle d_\nabla d^c u\wedge d^c u\rangle\wedge\Omega_0=
    \langle R_N(d^c u)\wedge d^c u\rangle\wedge\Omega_0 + \langle d_\nabla d^c u
    \wedge d_\nabla d^c u\rangle\wedge\Omega_0
$$
and so,
$$
    0=\int_X\left\{
    \langle R_N(d^c u)\wedge d^c u\rangle\wedge\Omega_0 + \langle d_\nabla d^c u
    \wedge d_\nabla d^c u\rangle\wedge\Omega_0
    \right\}
$$
    By \eqref{eqn:harmonic} and  the Hodge-Riemann property, 
    the second term is nonpositive. We claim
    that also 
    $$
\langle R_N(d^c u)\wedge d^c u\rangle\wedge\Omega_0 \leq 0
$$
Given this, it follows that $d_\nabla d^c u=0$; hence, the
    pluriharmonicity. The claim follows from the assumption on $\Omega_0$
    and the nonpositivity of the complexified sectional curvature of $N$.
    We work at a point $x$. By definition, we know that
    $$
    \Omega_0=\sum_{i} \mu_i \sqrt{-1}\alpha_1^i \wedge
    \overline{\alpha_1^i}\wedge \cdots\wedge \sqrt{-1}\alpha_{n-2}^i \wedge \overline{\alpha_{n-2}^i}
    $$
    where $\mu_i\geq 0$, and $\{\alpha_1^{i},\cdots, \alpha_{n-2}^i\}$ 
    are linearly independent $(1,0)$ forms. 
    Denote by $P_i$  the complex two dimensional subspace of $TM$ where
    $\alpha^i_j|_{P_i}=0$ for $j=1,\cdots, n-2$. 
    Fix $X_i, Y_i$ so that $\{X_i, Y_i, JX_i, JY_i\}$ form an orthogonal basis for $P_i$.
    Then 
    $$
    \langle R_N(d^c u)\wedge d^c u\rangle\wedge\Omega_0=\sum_i \mu_i'\langle R_N(d^c u)\wedge d^c u\rangle (X_i,Y_i,JX_i,JY_i)\dVol
    $$
    for some $\mu_i'\geq 0$. Now as in \cite[p.\ 75]{ToledoBook}, we know 
    $$\langle R_N(d^c u)\wedge d^c u\rangle (X_i,Y_i,JX_i,JY_i)=R_N(Z_i,W_i,\overline{W_i},\overline{Z_i}) \leq 0$$ 
    where $Z_i=du(X_i-JX_i)$ and $W_i=du(W_i-JW_i)$. The claim follows.
\end{proof}

\begin{rmk} \label{rmk:equivariant}
    If $\widetilde X$ is the universal cover of $X$, then
    Theorem \ref{thm:siu-sampson} remains valid for harmonic maps
    $u:\widetilde X\to N$ that are equivariant with respect to a
    representation $\rho:\pi_1(X)\to \Iso(N)$. These play a role in the
    next section. The existence of equivariant harmonic maps to
    nonpositively curved targets $N$ is guaranteed if $\rho$ is
    \emph{reductive} (or \emph{semisimple}) (see \cite{Corlette:88,
    Donaldson:87, Labourie:91, JostYau:91}). 
\end{rmk}

Theorem \ref{thm:siu-sampson},  
combined with \cite[Prop.\ III.1.11]{Demailly} implies
\begin{cor} \label{cor:siu-sampson}
Suppose $X$ is a compact complex manifold with a balanced metric $\omega$
    of the form \eqref{eqn:multipolarization},
  where $\omega_i$ are positive $(1,1)$-forms and 
    $$d(\omega_1 \wedge \cdots \wedge \omega_{n-2})=0$$ 
    If $N$
    is a Riemannian manifold with nonpositive
    complexified sectional curvature, then every harmonic map $u:X\to N$ is
    pluriharmonic.
\end{cor}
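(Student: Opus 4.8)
The plan is to verify that the multipolarization data $(\omega_0,\omega_1,\ldots,\omega_{n-2})$ realizes $\omega$ as a balanced metric of Hodge-Riemann type for which the additional strong positivity hypothesis of Theorem \ref{thm:siu-sampson} holds, so that the conclusion follows immediately. Accordingly, set $\Omega_0 := \omega_1 \wedge \cdots \wedge \omega_{n-2}$, with $\omega_0$ the remaining factor in \eqref{eqn:multipolarization}.

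First I would check the three conditions of Definition \ref{def:bhr}. Condition (1) is precisely the defining relation \eqref{eqn:multipolarization}, namely $\frac{\omega^{n-1}}{(n-1)!} = \omega_0 \wedge \Omega_0$. Condition (2)---that $\Omega_0$ is a Hodge-Riemann form for $(p,q)$ with $p+q=2$ at every point---follows from Corollary \ref{cor1.4}(2), itself a consequence of Timorin's theorem (Proposition \ref{prop1.2}), applied fiberwise to the constant positive $(1,1)$-forms obtained by freezing the $\omega_i$ at each point. Condition (3) holds because $\Omega_0$ is closed by the hypothesis $d(\omega_1 \wedge \cdots \wedge \omega_{n-2}) = 0$, while $\omega_0 \wedge \Omega_0 = \frac{\omega^{n-1}}{(n-1)!}$ is closed since $\omega$ is balanced.

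The one genuinely new input---and the point at which \cite[Prop.\ III.1.11]{Demailly} is invoked---is the strong positivity of $\Omega_0$. This is exactly what the proof of Theorem \ref{thm:siu-sampson} requires, in order to express $\Omega_0$ as a nonnegative combination of decomposable forms $\sqrt{-1}\alpha_1 \wedge \overline{\alpha_1} \wedge \cdots \wedge \sqrt{-1}\alpha_{n-2} \wedge \overline{\alpha_{n-2}}$ and then reduce the curvature estimate to the nonpositivity of the complexified sectional curvature on the associated complex two-planes. Since each $\omega_i$ is a positive $(1,1)$-form and a wedge product of positive $(1,1)$-forms is strongly positive by that proposition, $\Omega_0 = \omega_1 \wedge \cdots \wedge \omega_{n-2}$ is strongly positive.

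With all hypotheses of Theorem \ref{thm:siu-sampson} verified, the theorem applies verbatim, and every harmonic map $u:X\to N$ is pluriharmonic. I do not anticipate any substantive obstacle: the corollary is a specialization in which Timorin's theorem supplies the Hodge-Riemann property and Demailly's positivity result supplies strong positivity---precisely the two features that the general Definition \ref{def:bhr} does not by itself guarantee. The only point demanding care is to distinguish strong positivity from mere (weak) positivity, as it is the former that is actually used in the pointwise decomposition of $\Omega_0$ in the proof of Theorem \ref{thm:siu-sampson}.
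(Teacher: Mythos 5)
Your proposal is correct and follows exactly the paper's intended argument: verify that the multipolarization realizes $\omega$ as balanced of Hodge-Riemann type via Timorin's theorem (Corollary \ref{cor1.4}), obtain strong positivity of $\Omega_0=\omega_1\wedge\cdots\wedge\omega_{n-2}$ from \cite[Prop.\ III.1.11]{Demailly}, and then apply Theorem \ref{thm:siu-sampson}. The paper states this as a one-line deduction, and your write-up simply makes the same verification explicit.
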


\section{The nonabelian Hodge correspondence}

The goal of this section is to prove the following generalization of
\cite[Cor.\ 1.3]{Simpson:92}.

\begin{thm} \label{thm:nah}
Suppose $X$ is a compact complex manifold
and $\omega$ is a balanced metric of Hodge-Riemann type on $X$. 
    Then the nonabelian Hodge correspondence holds over $(X, \omega)$. More
    precisely, we have a 1-1 correspondence between 
\begin{enumerate}
\item semisimple flat bundles on $X$,  and
\item isomorphism classes of $\omega$-polystable Higgs bundles $(\Ecal, \theta)$ 
    with $\ch_1(\Ecal)\cup[\omega^{n-1}]=0$ and $\ch_2(\Ecal)\cup[\Omega_0]=0$.
\end{enumerate}
\end{thm}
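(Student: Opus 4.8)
The plan is to construct the bijection via the two mutually inverse passages sketched in the introduction, realizing \emph{both} directions through a single distinguished metric and converting the analytic output ($F_D=0$, resp.\ $G_D=0$) from the pointwise \emph{primitivity} supplied by \eqref{eqn:F}, \eqref{eqn:G} into genuine vanishing by means of the Hodge-Riemann bilinear relations of Definition \ref{Defi:Hodge-Riemann}. This is the balanced analogue of \cite[Lem.\ 1.1]{Simpson:92}.

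For the first direction (Higgs $\Rightarrow$ flat), let $(\Ecal,\theta)$ be $\omega$-polystable with $\ch_1(\Ecal)\cup[\omega^{n-1}]=0$ and $\ch_2(\Ecal)\cup[\Omega_0]=0$. The first condition says $\deg\Ecal=0$, so $\mu(\Ecal)=0$; by Theorem \ref{thm:HE} there is a HE metric $h$, and since the constant $\lambda$ in \eqref{eqn:HE} is proportional to $\mu(\Ecal)$ we get $\lambda=0$, whence \eqref{eqn:F} holds for $D=(\Ecal,\theta,h)$. Decomposing $F_D=F^{2,0}+F^{1,1}+F^{0,2}$ by type, the components $F^{2,0},F^{0,2}$ are primitive for trivial dimension reasons while \eqref{eqn:F} with $\lambda=0$ makes $F^{1,1}$ primitive. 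By Chern-Weil, $\int_X\tr(F_D\wedge F_D)\wedge\Omega_0$ is a nonzero constant multiple of $\int_X\ch_2(\Ecal)\wedge\Omega_0=0$. I would then apply the Hodge-Riemann relations to the $\End\Ecal$-valued primitive forms $F^{p,q}$, using the Hermitian metric $\langle A,B\rangle=\tr(AB^\ast)$ on $\End\Ecal$ together with the positivity of $Q$ on $P^{p,q}$, to express the (vanishing) integral as a sum of terms, one per bidegree, whose signs all agree, forcing each $F^{p,q}=0$ and hence $F_D=0$. Equivalently, one may run this through the generalized Bogomolov-Miyaoka-Yau inequality: Corollary \ref{cor:bmy} gives $\int_X(2rc_2-(r-1)c_1^2)\wedge\Omega_0\ge 0$, while the hypotheses together with the Hodge-Riemann sign $\int_X c_1^2\wedge\Omega_0\le 0$ for the primitive class $c_1$ force equality, hence projective flatness; the degree-zero primitivity of $\tr F_D$ then kills the trace part. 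Thus $D$ is flat, and its monodromy is reductive since the bundle is polystable, giving a semisimple flat bundle.

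For the second direction (flat $\Rightarrow$ Higgs), let $(E,D)$ be semisimple, i.e.\ the monodromy representation is reductive. By Corlette's theorem (Remark \ref{rmk:equivariant} and the references therein) there is a harmonic metric $h$, and writing $D=D_h+\theta+\theta^\ast$ with $D''=\dbar_h+\theta$, the harmonic equation together with the balanced Kähler identities of Lemma \ref{Lemma:Kahler identity} yields \eqref{eqn:G}, i.e.\ $G_D=(D'')^2$ is primitive. As before, the ``pseudo-second-Chern number'' $\int_X\tr(G_D\wedge G_D)\wedge\Omega_0$ vanishes automatically by flatness of $D$, as noted in \cite[p.\ 17]{Simpson:92}, and the Hodge-Riemann relations upgrade primitivity to $G_D=0$; hence $\dbar_h^2=0$, $\dbar_h\theta=0$, and $\theta\wedge\theta=0$, so $(\Ecal,\theta)$ with $\Ecal=(E,\dbar_h)$ is a genuine Higgs bundle. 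It is $\omega$-polystable because $h$ is tautologically a HE metric with $\lambda=0$ (as $F_D=0$), so Theorem \ref{thm:HE} applies, and the Chern-class conditions hold because the real Chern classes of a flat bundle vanish. I note that this direction deliberately goes through the Hodge-Riemann integral argument rather than the Sampson-Siu route, so that only Definition \ref{def:bhr}, and not the strong positivity of $\Omega_0$ required in Theorem \ref{thm:siu-sampson}, is needed.

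The two passages are mutually inverse and descend to isomorphism classes: both are effected by the \emph{same} distinguished metric, and the uniqueness clause of Theorem \ref{thm:HE} (for stable objects up to scale, and for polystable objects up to the automorphisms respecting the splitting) makes the constructions canonical, as in \cite{Simpson:92, Corlette:88}. The main obstacle I anticipate is the Hodge-Riemann positivity step common to both directions, the balanced replacement for Simpson's Lemma 1.1. One must verify that the bidegree components $F^{2,0},F^{1,1},F^{0,2}$ (resp.\ those of $G_D$), though paired by the \emph{indefinite} trace form, contribute to the single integral $\int_X\tr(F_D\wedge F_D)\wedge\Omega_0$ with \emph{mutually consistent signs}, so that its vanishing forces every component to vanish pointwise. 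This alignment is exactly what the factors $(\sqrt{-1})^{p-q}(-1)^{(p+q)(p+q-1)/2}$ in $Q$ and the positive-definiteness of $Q$ on $P^{p,q}$ are designed to produce, once combined with the adjointness relation $F^{0,2}=(F^{2,0})^\ast$ and the skew-self-adjointness of the $(1,1)$-component.
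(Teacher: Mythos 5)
Your overall architecture coincides with the paper's (Theorem \ref{thm:HE} with $\lambda=0$ plus Chern--Weil and the Hodge--Riemann relations in one direction; Corlette's harmonic metric plus primitivity and a positivity argument in the other), but the positivity step that you yourself flag as the main obstacle is resolved incorrectly. The claim that the bidegree components of $F_D$ contribute to $\int_X\tr(F_D\wedge F_D)\wedge\Omega_0$ with mutually consistent signs is false. Unwinding Definition \ref{Defi:Hodge-Riemann}: for $(p,q)=(1,1)$ the prefactor is $(\sqrt{-1})^{0}(-1)^{1}=-1$, so positivity of $Q$ on $P^{1,1}$ forces $\alpha\wedge\bar\alpha\wedge\Omega_0\le 0$ for primitive $(1,1)$-forms, while for $(p,q)=(2,0)$ the prefactor is $(\sqrt{-1})^{2}(-1)^{1}=+1$, so $\beta\wedge\bar\beta\wedge\Omega_0\ge 0$. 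Since $F_D$ is anti-hermitian, $\tr(F_D\wedge F_D)=-\sum_{i,j}F_{ij}\wedge\overline{F_{ij}}$ in a unitary frame, and the $(2,2)$-part wedged with $\Omega_0$ is therefore \emph{nonnegative} in its $(1,1)$ contribution and \emph{nonpositive} in its $(2,0)$ and $(0,2)$ contributions; the vanishing of the integral forces nothing. (This is the same indefiniteness that makes $\int\ch_2$ a difference of instanton and anti-instanton energies on a surface.) The argument closes only because $F_D^{2,0}=\partial_E\theta$ and $F_D^{0,2}=\dbar_E\theta^\ast$ actually vanish --- the paper simply records that $\sqrt{-1}F_D$ is of type $(1,1)$ --- and this is an additional fact requiring its own (integration by parts) justification, not a consequence of \eqref{eqn:F} and $\ch_2(\Ecal)\cup[\Omega_0]=0$ alone. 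Only for $\theta=0$, where $F_D$ is the Chern curvature and hence automatically of type $(1,1)$, does your argument go through as written.

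The same sign conflict defeats your treatment of the flat-to-Higgs direction: $G_D=\dbar_E^2+\dbar_E\theta+\theta\wedge\theta$ has components in all three bidegrees, and even granting the vanishing of the pseudo-Chern number $\int_X\tr(G_D\wedge G_D)\wedge\Omega_0$, the $(1,1)$ and $(2,0)/(0,2)$ pieces again enter with opposite signs, so ``Hodge--Riemann upgrades primitivity to $G_D=0$'' does not follow. The paper uses a different device here, namely the Stokes/Bochner identity
$$
0=\int_X d\,\tr(\dbar_E\theta\wedge\theta^\ast)\wedge\Omega_0=-\frac14\int_X\tr([\theta,\theta]\wedge[\theta,\theta]^\ast)\wedge\Omega_0+\int_X\tr\bigl(\dbar_E\theta\wedge(\dbar_E\theta)^\ast\bigr)\wedge\Omega_0,
$$
in which the flatness identities ($\partial_E\theta=0$, $\partial_E^2+\tfrac12[\theta,\theta]=0$, anti-self-adjointness of $\dbar_E\theta$) arrange the two quadratic terms so that \emph{both} are nonpositive, by the Hodge--Riemann relations in degrees $(2,0)$ and $(1,1)$ respectively; their vanishing then yields $\dbar_E\theta=0$, $[\theta,\theta]=0$, and, via the flatness identity for $\partial_E^2$, the integrability of $\dbar_E$. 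You need this computation (or Simpson's equivalent) rather than the bare pseudo-Chern-class statement. The remainder of your proposal --- the use of Theorem \ref{thm:HE} and the vanishing of $\lambda$, Corlette's theorem, the Chern-class bookkeeping, and the uniqueness clauses making the correspondence well defined --- matches the paper.
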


\begin{rmk}
\begin{itemize}
    \item When $\omega_0=\omega$, $\Omega_0=\omega^{n-2}/(n-1)!$, then by
        our assumptions  
        $(X, \omega)$ is K\"ahler. Then Theorem \ref{thm:nah} reduces to the
        well known nonabelian Hodge correspondence for compact K\"ahler manifolds.

\item There exist many examples where $\omega$ is not a  K\"ahler
    metric, even when the underlying manifold $X$ is K\"ahler, or even
        projective algebraic. For example, 
        take $X$ to be projective with $[\omega_i]$, $i=0,\ldots, n-2$, 
        all ample classes, and take $\omega$ as in
        \eqref{eqn:multipolarization}.
        Then  the class $\omega^{n-1}$ represents a point in the 
        interior of the cone of \emph{movable curves} (see \cite{BDPP:2013}). 

\item 
    Notice that if $\Omega_0=\omega_1^{n-2}$, then $d\Omega_0=0$ implies
        $d\omega_1=0$, and 
    the manifold is K\"ahler. However, closedness of
        $\Omega_0=\omega_1\wedge\cdots\wedge\omega_{n-2}$, for different
        $\omega_i$, can occur in the non-K\"ahler setting. 
        One might expect that this will provide new insights for the study the
        non-K\"ahler complex manifolds,
        since the results obtained here already put restrictions on complex manifolds
        admitting such structures.
\end{itemize}
\end{rmk}

\begin{proof}[Proof of Theorem \ref{thm:nah}]
The proof, of course, closely follows the lines of the classical theorem, taking
    care to avoid the K\"ahler condition. 

    First, assume $(\Ecal,\theta)$ is an $\omega$-polystable Higgs bundle.
    If $\ch_1(\Ecal)\cup[\omega^{n-1}]=0$, then by Theorem \ref{thm:HE}
    there is a hermitian metric $h$ on $\Ecal$ such that \eqref{eqn:F} is
    satisfied for 
    \begin{equation} \label{eqn:D}
    D=\dbar_E+\partial_E+\theta+\theta^\ast
    \end{equation}
    (note that $\lambda=0$ by \eqref{eqn:lambda}). 
    Hence, $F_D$ is primitive.  
    Moreover, $\sqrt{-1}F_D$ is of type $(1,1)$ and hermitian.
    Since $\ch_2(\Ecal)\cup[\Omega_0]=0$, we have
    $$
    0=\int_X\tr(F_D\wedge F_D)\wedge\Omega_0
    $$
    Since $\Omega_0$ is a Hodge-Riemann form, we conclude that $D$ is a
    flat connection, which is necessarily semisimple.

Now let $D$ be a semisimple flat connection on $E$.
    By Corlette's theorem (see Remark \ref{rmk:equivariant}) there exists a
    \emph{harmonic metric}, which has the following consequence.
    Decomposing into type we can express $D$ as in \eqref{eqn:D},
    where $\theta\in \Omega^{1,0}(X,\End E)$, and  $\theta+\theta^\ast$ is
    essentially $du$ for an equivariant harmonic map $u:\widetilde X\to
    \GL(n,\CBbb)/\U(n)$. 
Let
    \begin{equation} \label{eqn:hodge}
D''=\dbar_E+\theta\quad ,\quad D'=\partial_E+\theta^\ast
    \end{equation}
    We wish to prove that $G_D=(D'')^2=0$, for then $\dbar_E$ is
    integrable, $\dbar_E\theta=0$, and $\theta\wedge\theta=0$;
    i.e.\ $(\dbar_E,\theta)$ is a Higgs bundle.

    Flatness of $D$ implies,
    \begin{enumerate}
        \item $\partial_E\theta=0$;
        \item $(\dbar_E\theta)^\ast=-\dbar_E\theta$;
        \item $\partial_E^2+\tfrac{1}{2}[\theta,\theta]=0$;
    \end{enumerate}
    By \eqref{eqn:harmonic}, we have
    $(\dbar_E\theta-(\dbar_E\theta)^\ast)\wedge\omega^{n-1}=0$, and
    combining this with (2) we have
    \begin{equation} \label{eqn:theta-primitive}
        \dbar_E\theta\wedge\omega^{n-1}=\dbar_E\theta\wedge\omega_0\wedge\Omega_0=0
    \end{equation}
    i.e.\ $G_D$ is primitive
    (note that we have only used the balanced condition for this part).
    
    To prove that $G_D=0$, we argue as in the proof of Theorem
    \ref{thm:siu-sampson} (see also \cite[proof of Thm.\ 5.1]{Corlette:88}). We have:
    \begin{align*}
        d\tr(\dbar_E\theta\wedge\theta^\ast)\wedge\Omega_0
        &=\partial\tr(\dbar_E\theta\wedge\theta^\ast)\wedge\Omega_0 \\
        &=\tr(\partial_E\dbar_E\theta\wedge\theta^\ast)\wedge\Omega_0+\tr(\dbar_E\theta\wedge(\dbar_E\theta)^\ast)\wedge\Omega_0\\
        &=-\frac{1}{2}\tr([[\theta,\theta^\ast],\theta]\wedge\theta^\ast)\wedge\Omega_0
        +\tr(\dbar_E\theta\wedge(\dbar_E\theta)^\ast)\wedge\Omega_0\\
        &=-\frac{1}{4}\tr([\theta,\theta]\wedge[\theta,\theta]^\ast)\wedge\Omega_0
        +\tr(\dbar_E\theta\wedge(\dbar_E\theta)^\ast)\wedge\Omega_0
    \end{align*}
so integrating,
$$
0=-\frac{1}{4}\int_X\tr([\theta,\theta]\wedge[\theta,\theta]^\ast)\wedge\Omega_0
        +\int_X\tr(\dbar_E\theta\wedge(\dbar_E\theta)^\ast)\wedge\Omega_0
$$
By the Hodge-Riemann property of $\Omega_0$, both terms on the right hand
side are nonpositive, and hence vanish. We conclude that $\dbar_E\theta=0$,
and $[\theta,\theta]=0$. By  (3) above, $\dbar_E$ is integrable, and this
completes the proof. 
\end{proof}

\section{Rigidity of representations of fundamental groups}\label{Section: rigidity}
For the sake of completeness, 
in this last  section we point out that two important results of Corlette
and Simpson generalize to our setting. 
Let $G_{\R}$ be a simple real algebraic group acting by isometries on the
irreducible bounded symmetric domain $G_{\R}/K$. We assume $(X, \omega)$ is 
a compact complex manifold with a balanced metric of Hodge-Riemann type.
Let $P$ be the principle $G_{\R}$ bundle with structure
group reduced to $K$. As in \cite{Corlette:91}, one can associate a volume
$\vol(P)$ to $P$ by defining it as a power of the first Chern class of $P$
up to a conformal factor. Now the following generalizes 
\cite[Thm.\ 0.1]{Corlette:91}.
 
\begin{thm}
Suppose $P$ is flat with $\vol(P)\neq 0$ and $G_{\R}/K$ is not of the form
    $\U(n,1)/\U(n)\times \U(1)$ or $\SO(2n+1,
    2)/\mathsf{S}(\mathsf{O}(2n+1)\oplus \mathsf{O}(2))$. Then the monodromy homomorphism of the fundamental group of $X$ into $G_{\R}$ is locally rigid as a homomorphism of the fundamental group of $X$ into the complexification of $G_{\R}$.  
\end{thm}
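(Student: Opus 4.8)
The plan is to follow Corlette's harmonic-map argument for \cite[Thm.\ 0.1]{Corlette:91}, replacing each use of the K\"ahler condition by the corresponding statement already established in this paper for balanced metrics of Hodge-Riemann type. First I would reduce to the case where the monodromy representation $\rho:\pi_1(X)\to G_{\R}$ is reductive, as in Corlette's original argument; the hypothesis $\vol(P)\neq 0$ enters already here, since the semisimplification has the same underlying smooth bundle and hence the same nonzero volume. With $\rho$ reductive, Corlette's existence theorem (Remark \ref{rmk:equivariant}) produces a $\rho$-equivariant harmonic map $u:\widetilde{X}\to G_{\R}/K$.

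Since $G_{\R}/K$ is an irreducible bounded symmetric domain, it is a Hermitian symmetric space of noncompact type and in particular has nonpositive complexified sectional curvature. Hence the equivariant version of the generalized Sampson-Siu theorem (Theorem \ref{thm:siu-sampson} together with Remark \ref{rmk:equivariant}, using that $\Omega_0$ is strongly positive) shows that $u$ is pluriharmonic. This is the first point at which the balanced Hodge-Riemann hypothesis is used, and it is precisely what the earlier sections were designed to supply.

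Next I would exploit the Hermitian structure of the target. Writing $\mathfrak{p}^{\mathbb C}=\mathfrak{p}^{+}\oplus\mathfrak{p}^{-}$ for the Cartan decomposition, pluriharmonicity forces the components of $\partial u$ along $\mathfrak{p}^{\pm}$ to be holomorphic in the appropriate sense, so that the associated harmonic bundle acquires the structure of a system of Hodge bundles --- equivalently, through the correspondence of Theorem \ref{thm:nah}, the Higgs bundle attached to $\rho$ becomes a fixed point of the natural $\C^{\ast}$-action. The role of the hypothesis $\vol(P)\neq 0$ is to rule out the degenerate possibility in which this Hodge structure is trivial. Here one writes $\vol(P)$ as an integral of a curvature expression against $\Omega_0$ and runs the same kind of estimate used in Corollary \ref{cor:bmy} and in the proof of Theorem \ref{thm:nah}: the Hodge-Riemann bilinear relations of Definition \ref{def:bhr} give a definite sign, and $\vol(P)\neq 0$ forces the extremal case in which $du$ lands entirely in $\mathfrak{p}^{+}$ (or entirely in $\mathfrak{p}^{-}$). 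The two excluded domains $\U(n,1)/\U(n)\times\U(1)$ and $\SO(2n+1,2)/\mathsf{S}(\mathsf{O}(2n+1)\oplus\mathsf{O}(2))$ are exactly the cases in Corlette's list where the curvature of the target fails to force this splitting, so they must be removed.

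Finally, once $\rho$ underlies a complex variation of Hodge structure --- i.e.\ the corresponding Higgs bundle is a $\C^{\ast}$-fixed point --- local rigidity of $\rho$ as a homomorphism into $G_{\C}$ follows from Simpson's rigidity theorem for such fixed points, which is insensitive to the K\"ahler condition and uses only the correspondence of Theorem \ref{thm:nah} on $(X,\omega)$. I expect the main obstacle to be the middle step: verifying that Corlette's curvature-and-volume integral identity, integrated in the K\"ahler case against $\omega^{n-1}$, transplants correctly when integrated against $\omega_0\wedge\Omega_0$ and controlled by the Hodge-Riemann relations, and in particular that the equality analysis (the passage from \emph{primitive} to \emph{holomorphic}) survives the weakening of the hypotheses. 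The remaining inputs are either purely Riemannian (existence of the equivariant harmonic map) or already proved in the generalized form needed here (pluriharmonicity and the nonabelian Hodge correspondence).
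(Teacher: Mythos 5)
Your overall strategy is the same as the paper's: the paper disposes of this theorem in two sentences, observing that the K\"ahler hypothesis enters Corlette's proof of \cite[Thm.\ 0.1]{Corlette:91} only through \cite[Prop.\ 2.4]{Corlette:91} (the holomorphicity of the harmonic section), and that this step can be replaced by the integration-by-parts argument against $\Omega_0$ given in the proof of Theorem \ref{thm:nah}; everything else in Corlette's argument (the reduction to the reductive case, the use of $\vol(P)\neq 0$, the Lie-theoretic analysis that excludes the two listed domains, and the final rigidity conclusion) is imported unchanged. Your outline matches this.

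There is, however, one concrete misstep: you obtain pluriharmonicity by invoking Theorem \ref{thm:siu-sampson}, which carries the \emph{additional} hypothesis that $\Omega_0$ is strongly positive. That hypothesis is not part of Definition \ref{def:bhr} and is not assumed in the rigidity theorem, so this route is not available as stated. The fix --- and the route the paper actually takes --- is to bypass the general Riemannian Sampson--Siu argument entirely and run the Lie-algebra-valued computation from the proof of Theorem \ref{thm:nah}: integrate $d\tr(\dbar_E\theta\wedge\theta^\ast)\wedge\Omega_0$ over $X$, use primitivity of $\dbar_E\theta$ (which needs only the balanced condition) and the Hodge--Riemann property of $\Omega_0$ on $(1,1)$- and $(2,0)$-forms to conclude $\dbar_E\theta=0$ and $[\theta,\theta]=0$. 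Since the target here is the specific symmetric space $G_{\R}/K$, the ``curvature term'' is exactly $\tr([\theta,\theta]\wedge[\theta,\theta]^\ast)$, so no strong positivity of $\Omega_0$ is needed. A smaller point: your explanation of the excluded domains as ``cases where the curvature of the target fails to force the splitting'' is not quite how they arise in Corlette's argument (they come from the Lie-theoretic classification step, which is untouched by the change of metric hypothesis), but since that part is borrowed wholesale from \cite{Corlette:91} it does not affect the correctness of the proof.
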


The argument follows by replacing  \cite[Prop.\ 2.4]{Corlette:91}
with argument in the proof of  Theorem \ref{thm:nah}  to get the
holomorphic property of the harmonic sections. This is the only place
where the K\"ahler assumption is needed in \cite{Corlette:91}. 
Simpson's argument in the K\"ahler
case also gives (see \cite{Simpson:92})
\begin{thm}
Suppose $\rho: \pi_1(X) \rightarrow \GL(n, \C)$ is a locally rigid representation
    of the fundamental group of $X$. Then the associated flat vector bundle is 
    the underlying vector bundle of a complex variation of Hodge structure. 
\end{thm}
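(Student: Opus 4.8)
The plan is to follow Simpson's strategy via the $\C^\ast$-action on Higgs bundles, with Theorem \ref{thm:nah} in place of the K\"ahler nonabelian Hodge correspondence. Since $\rho$ is locally rigid it is semisimple, so Theorem \ref{thm:nah} associates to its flat bundle an $\omega$-polystable Higgs bundle $(\Ecal,\theta)$ with $\ch_1(\Ecal)\cup[\omega^{n-1}]=0$ and $\ch_2(\Ecal)\cup[\Omega_0]=0$. The goal is to show that $(\Ecal,\theta)$ is a fixed point of the natural action $t\cdot(\Ecal,\theta)=(\Ecal,t\theta)$ of $\C^\ast$, for a Higgs bundle fixed by this action is precisely a system of Hodge bundles and therefore underlies a complex variation of Hodge structure.

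First I would record that the hypotheses of Theorem \ref{thm:nah} are preserved under scaling $\theta$. A coherent subsheaf is $\theta$-invariant if and only if it is $(t\theta)$-invariant for $t\in\C^\ast$, so $(\Ecal,t\theta)$ is $\omega$-polystable exactly when $(\Ecal,\theta)$ is; and the classes $\ch_1(\Ecal)$, $\ch_2(\Ecal)$, hence both vanishing conditions, depend only on the holomorphic bundle $\Ecal$ and not on $\theta$. Consequently, for every $t\in\C^\ast$ the pair $(\Ecal,t\theta)$ again satisfies the hypotheses of Theorem \ref{thm:nah} and so corresponds to a semisimple flat bundle, that is, to a representation $\rho_t$ with $\rho_1=\rho$.

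Next I would show that $t\mapsto\rho_t$ is continuous in the character variety. Running the correspondence amounts to solving, for each $t$, the Hermitian-Einstein equation of Theorem \ref{thm:HE} for $(\Ecal,t\theta)$ to obtain a metric $h_t$ (with $\lambda=0$), and then using $\ch_2(\Ecal)\cup[\Omega_0]=0$ and the Hodge-Riemann property of $\Omega_0$ to conclude that the resulting connection $D_t=\dbar_E+\partial_E+t\theta+(t\theta)^\ast$ is flat; the holonomy of $D_t$ is $\rho_t$. Because the equations are governed by the fixed form $\omega_0\wedge\Omega_0$, the solutions $h_t$, and hence $D_t$ and $\rho_t$, depend continuously on $t$. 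Local rigidity of $\rho=\rho_1$ then forces $[\rho_t]=[\rho]$ for all $t$ near $1$, and injectivity in Theorem \ref{thm:nah} yields isomorphisms of Higgs bundles $(\Ecal,t\theta)\cong(\Ecal,\theta)$ for such $t$. The set $S=\{t\in\C^\ast:(\Ecal,t\theta)\cong(\Ecal,\theta)\}$ is a subgroup of $\C^\ast$ containing a neighborhood of $1$, so $S=\C^\ast$; thus $(\Ecal,\theta)$ is fixed by the entire $\C^\ast$-action.

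Finally I would extract the Hodge structure from the $\C^\ast$-fixedness. Because $(\Ecal,\theta)\cong(\Ecal,t\theta)$ for all $t$, the isomorphisms can be organized (after decomposing into stable summands) into a holomorphic action of $\C^\ast$ on $\Ecal$ by automorphisms; its weight decomposition $\Ecal=\bigoplus_p\Ecal^p$ satisfies $\theta(\Ecal^p)\subset\Ecal^{p-1}\otimes\Omega^1_X$, which is exactly a system of Hodge bundles, i.e.\ a complex variation of Hodge structure underlying the flat bundle. The step I expect to be the main obstacle is the continuity of $t\mapsto\rho_t$ together with the identification of isomorphism classes across the correspondence: in the K\"ahler case this is packaged in the analytic structure of Simpson's moduli spaces, whereas here it must be drawn directly from the continuous dependence and uniqueness of solutions to the Hermitian-Einstein and harmonic-metric equations provided by Theorem \ref{thm:HE}, which is precisely where the balanced Hodge-Riemann hypotheses do the work.
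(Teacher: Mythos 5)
Your proposal is essentially the paper's own (implicit) proof: the paper offers no independent argument for this theorem, stating only that ``Simpson's argument in the K\"ahler case also gives'' the result, and the argument you outline---deforming $\theta$ to $t\theta$, using local rigidity together with continuity of $t\mapsto\rho_t$ to force fixedness under the $\C^\ast$-action, and identifying fixed points with systems of Hodge bundles---is exactly that argument, transported through Theorem \ref{thm:nah}. The two points you would still need to nail down in this balanced non-K\"ahler setting (continuous dependence of the Hermitian--Einstein metric and hence of $\rho_t$ on $t$, and the justification that local rigidity permits the reduction to the semisimple case) are precisely the ones the paper also leaves to Simpson.
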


\bibliography{../papers}

\end{document}